\numberwithin{equation}{section}
\newtheorem{main}{Theorem}
\newtheorem{thm}{Theorem}[section]
\newtheorem*{thm*}{Theorem}
\newtheorem{lem}[thm]{Lemma}
\newtheorem*{prob*}{Problem}
\newtheorem{prop}[thm]{Proposition}
\newtheorem*{prop*}{Proposition}
\newtheorem{cor}[thm]{Corollary}
\newtheorem*{cor*}{Corollary}
\theoremstyle{definition}
\newtheorem{defn}[thm]{Definition}
\newtheorem*{defn*}{Definition}
\newtheorem{rem}[thm]{Remark}
\newtheorem{question}[thm]{Question}
\newtheorem*{question*}{Question}
\newtheorem*{Pquestion*}{Popa's question}
\newtheorem*{conv*}{Convention}
\newcommand{\Sym}{\textrm{Sym}}
\begin{document}

\title[ Soficity for group actions on sets and applications]
{ Soficity for group actions on sets and applications}

\author[David Gao]{David Gao}
\address{Department of Mathematical Sciences, UCSD, 9500 Gilman Dr, La Jolla, CA 92092, USA}\email{weg002@ucsd.edu}
\author[Srivatsav Kunnawalkam Elayavalli]{Srivatsav Kunnawalkam Elayavalli}
\address{Department of Mathematical Sciences, UCSD, 9500 Gilman Dr, La Jolla, CA 92092, USA}\email{srivatsav.kunnawalkam.elayavalli@vanderbilt.edu}
\urladdr{https://sites.google.com/view/srivatsavke/home}

\author[Gregory Patchell]{Gregory Patchell}
\address{Department of Mathematical Sciences, UCSD, 9500 Gilman Dr, La Jolla, CA 92092, USA}\email{gpatchel@ucsd.edu}

\begin{abstract}
In this article we  develop a notion of soficity for actions of countable groups on sets. We show two  equivalent perspectives, several natural properties and examples. Notable examples include arbitrary actions of both amenable groups and free groups, and actions of sofic groups with locally finite stabilizers.   As applications we prove soficity for generalized wreath products (and amalgamated free generalized wreath products) of sofic groups where the underlying group action is sofic. This generalizes the result of Hayes and Sale \cite{HayesSale}, and proves soficity for many new families of groups.

\end{abstract}
\maketitle

\section{Introduction}

The study of finitary approximations of countable groups is a modern area of interest in group theory. One such finitary approximation property is the notion of soficity due to Gromov (for a detailed survey with references see \cite{surveysofic}). Groups that satisfy this property are known to additionally verify other important group theoretic questions such as Gottschalk surjunctivity conjecture \cite{KLi}, Kervaire conjecture (see Corollary 10.4 of \cite{surveysofic}), Kaplansky direct finiteness conjecture \cite{elekszabodirect}, determinant conjecture \cite{Luck}, Connes embedding problem \cite{Connes} etc (see also \cite{Bowenfinvariant, ThomDiophantine}). There is therefore an interest to identify more examples of soficity in countable groups, irrespective of the notorious open problem of the existence of a non sofic group.  There are currently many known examples of sofic groups (two elementary and important sources of examples include amenable groups, residually finite groups), and examples of group operations that preserve soficity. These include taking direct products of sofic groups, amalgamated free products and HNN extensions of sofic groups over amenable amalgams \cite{Paunescusofic1, elekszabosofic, DKPsofic, Popaindep}, wreath products of sofic groups \cite{HayesSale}. 

In this paper we strictly generalize \cite{HayesSale} and identify new examples of soficity in the setting of generalized wreath product groups. In order to identify the correct level of generality, we first had to understand and develop a natural notion of soficity for a group action on a set. To our knowledge, a satisfactory definition of this is currently unavailable in the literature (note that there are very satisfying definitions of soficity in the different more analytic setting of probability measure preserving actions, see \cite{ElekLip, PaunConvex}). Aside from our applications in this paper, we would like to place an emphasis on the fact that we fill this void, and begin developing a fruitful theory of soficity for group actions on sets, with potential other use in the future. We provide the definition below: 

Let $G$ be a countable discrete group, $X$ be a countable discrete set, $\alpha: G \curvearrowright X$ be an action, $A$ be a finite set, $\varphi: G \rightarrow \Sym(A)$ be a map. For a finite subset $F \subseteq G$ and $\epsilon > 0$, $\varphi$ is called $(F, \epsilon)$-\textit{multiplicative} if $d(\varphi(gh), \varphi(g)\varphi(h)) < \epsilon$ for all $g, h \in F$, where $d$ denotes the normalized Hamming distance. For finite subsets $F \subseteq G$, $E \subseteq X$, and $\epsilon > 0$, $\varphi$ is called an $(F, E, \epsilon)$-\textit{orbit approximation of} $\alpha$ if there exists a finite set $B$ and a subset $S \subseteq A$ s.t. $|S| > (1 - \epsilon)|A|$ and for each $s \in S$ there is an injective map $\pi_s: E \hookrightarrow B$ s.t. $\pi_{\varphi(g)s}(x) = \pi_s(\alpha(g^{-1})x)$ for all $s \in S$, $g \in F$, $x \in E$, whenever $\varphi(g)s \in S$ and $\alpha(g^{-1})x \in E$. 

\begin{defn*}[see Definition \ref{main defn}]
    
$\alpha$ is called \textit{sofic} if for all finite subsets $F \subseteq G$, $E \subseteq X$, and $\epsilon > 0$, there exists a finite set $A$ and a map $\varphi: G \rightarrow \Sym(A)$ which is unital, $(F, \epsilon)$-multiplicative, and an $(F, E, \epsilon)$-orbit approximation of $\alpha$.

\end{defn*}

If the reader is more comfortable working in the ultrapower framework, we redirect them to Proposition \ref{ultra-prod}, where we present a natural equivalent definition involving the natural action of the universal sofic group on the Loeb measure space (see for instance \cite{hayes2023sofic}). This could even serve as an additional motivation for why we define soficity in this manner. 

The first example  is the left action of a sofic group $G$ on itself. We show (in Theorem \ref{thm-left-multi} and Theorem \ref{group being sofic if action is sofic}) that this is sofic if and only if $G$ is sofic. More generally, we are able to show (in Theorem \ref{thm-left-multi}) that $G$ acting on the coset space $G/H$ is sofic where $H$ is any locally finite subgroup. On the other side, we have (see Theorem \ref{group being sofic if action is sofic}) that if the action of $G$ on a coset space $G/H$ is sofic, then there is a normal subgroup $N$ of $G$ such that $N\leq H$ and $G/N$ is sofic.  This by itself could give help to finding new examples of sofic groups. \footnote{We would like to remind the readers that it remains a puzzling question whether $G/N$ is sofic for $G$ sofic and $N$ is a finite normal subgroup, say even $|N|=2$.}

The following are rather quick observations:  If $\alpha: G \curvearrowright X$ is the composition of a quotient map $q: G \rightarrow H$ and a sofic action $\beta: H \curvearrowright X$, then $\alpha$ is sofic;
     If $\alpha: G \curvearrowright X$ is sofic, then the restriction of $\alpha$ to each of its orbits is sofic; If $\alpha: G \curvearrowright X$ is sofic and $H$ is a subgroup of $G$, then $\alpha|_H$ is sofic; If $G_1 \subseteq G_2 \subseteq \cdots$ is an increasing sequence of subgroups of $G$ whose union is $G$, and $\alpha: G \curvearrowright X$ restricted to each $G_i$ is sofic, then $\alpha$ is sofic. Moreover we have that if the restriction of $\alpha: G \curvearrowright X$ to each of its orbits is sofic, then $\alpha$ is sofic (see Proposition \ref{transitive-prop}). This  naturally reduces the study of sofic actions to transitive ones.

It is  of course an open question whether all actions of sofic groups  are sofic. Interestingly in Theorem \ref{amenable examples all}  and Theorem \ref{free examples all} we find that there are two classes of groups whose arbitrary actions are sofic: amenable groups and free groups.  We are ready to now state our main result (see Theorem \ref{gen-wr-prod}) which recovers and generalizes the main result of Hayes and Sale \cite{HayesSale}. 

\begin{main}
    Let $G, H$ be sofic groups, $\alpha: H \curvearrowright X$ be a sofic action. Then the generalized wreath product $G \wr_{\alpha} H$ is sofic.
\end{main}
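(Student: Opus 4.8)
The plan is to imitate the Hayes--Sale construction for ordinary wreath products, using the orbit approximation data supplied by soficity of $\alpha$ in place of a sofic approximation of the left translation action $H \curvearrowright H$. Write $W = G \wr_\alpha H = \big(\bigoplus_{x \in X} G\big) \rtimes_\alpha H$, with elements $(f,h)$, $f \colon X \to G$ finitely supported, $h \in H$, and $(f_1,h_1)(f_2,h_2) = (f_1 \cdot \lambda_{h_1}f_2,\, h_1h_2)$ where $(\lambda_h f)(x) = f(\alpha(h^{-1})x)$. Fix a finite $\widehat F \subseteq W$ and $\delta > 0$. From $\widehat F$ one reads off a finite symmetric $F_H \subseteq H$ (all $H$-coordinates of elements of $\widehat F$ and all products of at most two of them), a finite symmetric $F_G \subseteq G$ closed under products (all values $f(x)$ of the $f$ appearing, together with the finitely many elements $f_1(\alpha(h_1h_2)z)f_2(\alpha(h_2)z)$ that will arise as $z$ ranges over $E_X$), and a finite $E_X \subseteq X$ containing the supports of all base parts of elements of $\widehat F$ and of their pairwise products, and closed under $\alpha(h^{\pm1})$ for $h \in F_H$. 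Choose auxiliary $\epsilon, \epsilon' > 0$ small compared to $\delta$ and $|E_X|$.

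Next I would assemble the two ingredient approximations. Since $\alpha$ is sofic, pick $\varphi \colon H \to \Sym(A)$ unital, $(F_H,\epsilon)$-multiplicative and an $(F_H,E_X,\epsilon)$-orbit approximation of $\alpha$; tensoring $\varphi$ with an almost-free sofic approximation of $H$ and transporting the orbit data through the first coordinate (the good set $S$ becomes $S_1 \times A_2$ and $\pi_{(s_1,s_2)} := \pi_{s_1}$, which preserves the orbit relation), we may assume in addition that $d(\varphi(h),\id) > 1 - \epsilon'$ for all $e \neq h \in F_H$. Let $B$, $S \subseteq A$ with $|S| > (1-\epsilon)|A|$, and injections $\pi_s \colon E_X \hookrightarrow B$ ($s \in S$) be the orbit data. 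Since $G$ is sofic, pick $\psi \colon G \to \Sym(C)$ unital, $(F_G,\epsilon')$-multiplicative, and almost free on $F_G \setminus \{e\}$. Put $D = C^B \times A$ with the uniform measure and define $\Psi \colon W \to \Sym(D)$ on $(f,h)$ (with $\supp f \subseteq E_X$) by $\Psi(f,h)(\xi,s) = (T^f_{s'}\xi,\, s')$ where $s' = \varphi(h)s$; when $s' \in S$, $T^f_{s'}\xi$ agrees with $\xi$ off $\{\pi_{s'}(x) : x \in \supp f\}$ and $(T^f_{s'}\xi)(\pi_{s'}(x)) = \psi(f(x))\,\xi(\pi_{s'}(x))$, and when $s' \notin S$ set $T^f_{s'} = \id$. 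Each $\Psi(f,h)$ is a bijection (invert with $\varphi(h)^{-1}$ and $\psi(f(x))^{-1}$, using that $T^f_{s'}$ depends only on the \emph{output} basepoint $s'$), and $\Psi$ is unital because $\varphi$ is.

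For approximate multiplicativity, fix $(f_1,h_1),(f_2,h_2) \in \widehat F$ and call $s$ \emph{good} if $s$, $\varphi(h_2)s$, $\varphi(h_1)\varphi(h_2)s$, $\varphi(h_1h_2)s$ all lie in $S$ and $\varphi(h_1)\varphi(h_2)s = \varphi(h_1h_2)s$; by multiplicativity of $\varphi$ and $|A \setminus S| < \epsilon|A|$ the good set has measure $\geq 1 - O(\epsilon)$. Applying the orbit relation $\pi_{\varphi(h)s}(x) = \pi_s(\alpha(h^{-1})x)$ repeatedly (legitimate since $E_X$ is closed under the relevant $\alpha(h^{-1})$ and $\pi_s$ is injective on $E_X$), one checks that for good $s$ both $\Psi(f_1,h_1)\Psi(f_2,h_2)(\xi,s)$ and $\Psi\big((f_1,h_1)(f_2,h_2)\big)(\xi,s)$ end at basepoint $\varphi(h_1h_2)s$ and, at each position $\pi_s(z) \in B$, modify $\xi(\pi_s(z))$ respectively by $\psi\big(f_1(\alpha(h_1h_2)z)\big)\psi\big(f_2(\alpha(h_2)z)\big)$ and by $\psi\big(f_1(\alpha(h_1h_2)z)\,f_2(\alpha(h_2)z)\big)$. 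By $(F_G,\epsilon')$-multiplicativity of $\psi$ these agree outside an $\epsilon'$-fraction of values at each of the $O(|E_X|)$ affected positions, so for good $s$ the two permutations agree on all but an $O(|E_X|\epsilon')$-fraction of $\xi$; hence $d\big(\Psi(f_1,h_1)\Psi(f_2,h_2),\ \Psi((f_1,h_1)(f_2,h_2))\big) = O(\epsilon + |E_X|\epsilon') < \delta$.

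For almost freeness take $e \neq (f,h) \in \widehat F$. If $h \neq e$, then $\Psi(f,h)(\xi,s) = (\xi,s)$ forces $\varphi(h)s = s$, true for at most an $\epsilon'$-fraction of $s$. If $h = e$ and $f \neq e$, fix $x_0 \in \supp f$; for $s \in S$ and arbitrary values of $\xi$ off $\pi_s(x_0)$, the equality $\Psi(f,e)(\xi,s) = (\xi,s)$ forces $\xi(\pi_s(x_0))$ to be a fixed point of $\psi(f(x_0))$, of which there are at most $\epsilon'|C|$, so the fixed set has measure $\leq \epsilon + \epsilon' < \delta$. Thus $\Psi$ witnesses soficity of $W$ for $(\widehat F,\delta)$; as $\widehat F,\delta$ were arbitrary, $G \wr_\alpha H$ is sofic. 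The only point going beyond \cite{HayesSale} is feeding in the orbit-approximation data (and noting it survives tensoring with an almost-free approximation of $H$); the rest is the bookkeeping above, and the step demanding the most care is checking that the chain of substitutions via the orbit relation reproduces the wreath-product multiplication on the good set.
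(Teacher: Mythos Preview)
Your argument is correct, but it takes a different route from the paper's published proof. You build a direct finitary sofic approximation $\Psi\colon W\to\Sym(C^B\times A)$ in the style of Hayes--Sale, and to obtain almost-freeness when $h\neq e$ you first tensor the orbit approximation $\varphi$ with an independent almost-free sofic approximation of $H$ (correctly observing that the orbit data $\pi_{(s_1,s_2)}:=\pi_{s_1}$ survives this). The paper instead works in the ultraproduct framework: it maps $G\wr_\alpha H$ into $\prod_{\mathcal U}\bigl(G_i^{\oplus A_i}\rtimes_{\beta_{A_i}}\Sym(A_i),\,d_{G_i,A_i}\bigr)$ with $G_i=G^{\oplus B_i}$, verifies this is a homomorphism $\rho$, and then does \emph{not} arrange $\varphi$ to be almost free at all; rather it lets $N=\ker\rho$, observes $N\subseteq G^{\oplus X}$ and is in fact trivial on $G^{\oplus X}$ by a direct distance estimate, and concludes via the embedding $gh\mapsto(ghN,h)$ into the sofic group $(G\wr_\alpha H)/N\times H$.

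What each buys: your approach is more elementary and self-contained (no metric $d_{G,A}$, no intermediate corollary, no ultraproducts), and is essentially the argument the paper itself writes out in a commented-out alternative proof. The paper's approach is more modular --- the lemma that $(G^{\oplus A}\rtimes\Sym(A),d_{G,A})$ embeds isometrically into a permutation ultraproduct, together with the kernel-plus-$H$ trick, cleanly separates the ``$\alpha$ is sofic'' input from the ``$G$ is sofic'' input and avoids the tensoring step; this modularity is also what lets the paper reuse the same skeleton for the amalgamated-free and von Neumann variants.
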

    
The proof of the main result is very much inspired by and pushes the ideas of \cite{HayesSale}. The above result also applies to the setting of amalgamated free generalized wreath product (see Theorem \ref{gen-free-wr-prod}). The unfamiliar reader is directed to Definition \ref{defn fgwp}. At this point we would like to mention that the natural variant of the above result can also be proved in the context of hyperlinearity, see Theorems \ref{hyper1}, \ref{hyper2} and Corollary \ref{hyper3}. 

The following are  some remarks and questions we wish to highlight out before concluding the introduction. Let $(\Omega, \mu)$ be a standard probability space, $G$ be a sofic group, $\alpha: G \curvearrowright X$ be a sofic action. Then the induced generalized Bernoulli shift $G \curvearrowright (\Omega^X, \mu^{\otimes |X|})$ is sofic in the sense of \cite{Paunescusofic1}. This places an aspect of our work in the context of Elek-Lippner's work \cite{eleklippner} defining soficity for equivalence relations and of Paunescu \cite{Paunescusofic1}. The most  important  open question that arises from our work is whether the converse of our main result holds: Let $G, H$ be nontrivial countable groups, $\alpha: H \curvearrowright X$ be an action. Then the generalized wreath product $G \wr_{\alpha} H$ is sofic if and only if $G$ and $H$ are sofic and the action $\alpha$ is sofic? As we highlight in Section 4  a positive answer to this question follows from a positive answer to the following stability question: Suppose we have actions $\alpha_i: G_i \curvearrowright X$ which commute with each other and where $i$ ranges over a countable index set. The actions naturally give rise to an action $\alpha: \oplus_i G_i \curvearrowright X$. Then, is $\alpha$ sofic if and only if all $\alpha_i$ are sofic?


\section{Sofic actions}

In the following, $d$, when denoting a metric on a symmetric group of a finite set, shall always be understood to be the normalized Hamming distance, unless specified otherwise. The following is our finitary definition  of sofic actions: 

\begin{defn}\label{main defn}
Let $G$ be a countable discrete group, $X$ be a countable discrete set, $\alpha: G \curvearrowright X$ be an action, $A$ be a finite set, $\varphi: G \rightarrow \Sym(A)$ be a map (not necessarily a homomorphism):
\begin{enumerate}
    \item $\varphi$ is called \textit{unital} if $\varphi(1_G) = 1$;
    \item For a finite subset $F \subseteq G$ and $\epsilon > 0$, $\varphi$ is called $(F, \epsilon)$-\textit{multiplicative} if $d(\varphi(gh), \varphi(g)\varphi(h)) < \epsilon$ for all $g, h \in F$;
    \item For finite subsets $F \subseteq G$, $E \subseteq X$, and $\epsilon > 0$, $\varphi$ is called an $(F, E, \epsilon)$-\textit{orbit approximation of} $\alpha$ if there exists a finite set $B$ and a subset $S \subseteq A$ s.t. $|S| > (1 - \epsilon)|A|$ and for each $s \in S$ there is an injective map $\pi_s: E \hookrightarrow B$ s.t. $\pi_{\varphi(g)s}(x) = \pi_s(\alpha(g^{-1})x)$ for all $s \in S$, $g \in F$, $x \in E$, whenever $\varphi(g)s \in S$ and $\alpha(g^{-1})x \in E$;
    \item Recall that $G$ is called \textit{sofic} if for all finite subsets $F\subseteq G$ and $\epsilon > 0$ there exists a finite set $A$ and a map $\varphi: G\to \Sym(A)$ which is unital, $(F,\epsilon)$-multiplicative, and $d(1, \varphi(g)) > 1 - \epsilon$ for all $g\in F\setminus\{e\}.$
    \item $\alpha$ is called \textit{sofic} if for all finite subsets $F \subseteq G$, $E \subseteq X$, and $\epsilon > 0$, there exists a finite set $A$ and a map $\varphi: G \rightarrow \Sym(A)$ which is unital, $(F, \epsilon)$-multiplicative, and an $(F, E, \epsilon)$-orbit approximation of $\alpha$.
\end{enumerate}
\end{defn}

Now we will place the above definition in the non-finitary setting of ultraproducts. We need some preliminary definitions and notations. 

\begin{defn}
Let $(X_n)$ be a sequence of sets, $\mathcal{U}$ be a free ultrafilter on $\mathbb{N}$. Then $\prod_\mathcal{U} X_n$, the \textit{algebraic ultraproduct} of $(X_n)$, is defined as $\prod_\mathcal{U} X_n = \prod X_n/\sim$ where $f \sim g$ iff $\{n: f(n) = g(n)\} \in \mathcal{U}$. We shall write $(x_n)_\mathcal{U}$ to mean the element of $\prod_\mathcal{U} X_n$ represented by $(x_n) \in \prod X_n$. If $(A_n \subseteq X_n)$ is a sequence of subsets, then we shall write $(A_n)_\mathcal{U}$ to mean,
\begin{equation*}
    (A_n)_\mathcal{U} = \{(x_n)_\mathcal{U} \in \prod_\mathcal{U} X_n: \{n: x_n \in A_n\} \in \mathcal{U}\}
\end{equation*}
\end{defn}

\begin{defn}
Let $(X_n)$ and $(Y_n)$ be two sequences of sets, $\mathcal{U}$ be a free ultrafilter on $\mathbb{N}$. A map $\varphi: \prod_\mathcal{U} X_n \rightarrow \prod_\mathcal{U} Y_n$ is called \textit{liftable} if there exists a sequence of maps $\varphi_n: X_n \rightarrow Y_n$ such that $\varphi((x_n)_\mathcal{U}) = (\varphi_n(x_n))_\mathcal{U}$ for all $(x_n)_\mathcal{U} \in \prod_\mathcal{U} X_n$.
\end{defn}

\begin{defn}[see for instance \cite{PaunConvex}]
Let $[n] = \{1, \cdots, n\}$, $\mu_n$ be the normalized counting measure on $[n]$, $\mathcal{U}$ be a free ultrafilter on $\mathbb{N}$. Then the \textit{Loeb measure space}, denoted by $\prod_\mathcal{U} ([n], \mu_n)$, is defined to have the underlying set $\prod_\mathcal{U} [n]$. For a sequence $(A_n \subseteq [n])$, we define $\mu_\mathcal{U}((A_n)_\mathcal{U}) = \lim_\mathcal{U} \mu_n(A_n)$. This can be extended to a measure on the $\sigma$-algebra generated by all such $(A_n)_\mathcal{U}$.
\end{defn}

\begin{defn}
Let $(X_n, d_n)$ be a sequence of metric spaces, $\mathcal{U}$ be a free ultrafilter on $\mathbb{N}$. Then $\prod_\mathcal{U} (X_n, d_n)$, the \textit{metric ultraproduct} of $(X_n, d_n)$, is defined to have the underlying set $\prod X_n/\sim$ where $f \sim g$ iff $\lim_\mathcal{U} d_n(f(n), g(n)) = 0$. We shall write $(x_n)_\mathcal{U}$ to mean the element of $\prod_\mathcal{U} X_n$ represented by $(x_n) \in \prod X_n$. Then the metric on this space is defined by $d_\mathcal{U}((x_n)_\mathcal{U}, (y_n)_\mathcal{U}) = \lim_\mathcal{U} d_n(x_n, y_n)$.
\end{defn}

\begin{defn}
Let $\mathcal{U}$ be a free ultrafilter on $\mathbb{N}$. Let $X$ consists of all liftable maps $\prod_\mathcal{U} ([n], \mu_n) \rightarrow \prod_\mathcal{U} \mathbb{N}$. We observe that the set $\{x: f(x) = g(x)\}$ for any $f, g \in X$ is always of the form $(A_n)_\mathcal{U}$ and therefore measurable. Let $\mathbb{X}^1_\mathcal{U}$ be $X/\sim$ where $f \sim g$ iff they coincide a.e. Let $d_\mathcal{U}$ be the metric on $\mathbb{X}^1_\mathcal{U}$ given by $d_\mathcal{U}([f], [g]) = \mu_\mathcal{U}\{x: f(x) \neq g(x)\}$. We observe that the universal sofic group $\prod_\mathcal{U} (S_n, d)$ naturally acts on the Loeb measure space via pmp automorphisms, and that the pre-composition of a liftable map with a sequence of permutations results in a liftable map. Therefore, $\prod_\mathcal{U} (S_n, d)$ naturally acts on $\mathbb{X}^1_\mathcal{U}$ via pre-composition of inverses. This action shall be denoted by $\mathbb{S}^1_\mathcal{U}$ and called the \textit{first universal sofic action}. We observe that this action preserves the metric $d_\mathcal{U}$.
\end{defn}

\begin{defn}
Let $\mathcal{U}$ be a free ultrafilter on $\mathbb{N}$. For each $n$, let $X_n$ be the collection of all maps $[n] \rightarrow \mathbb{N}$ and $d_n$ be the normalized Hamming distance on $X_n$. Let $\mathbb{X}^2_\mathcal{U} = \prod_\mathcal{U} (X_n, d_n)$. We define the following action $\mathbb{S}^2_\mathcal{U}$ of the universal sofic group $\prod_\mathcal{U} (S_n, d)$ on $\mathbb{X}^2_\mathcal{U}$ by $\mathbb{S}^2_\mathcal{U}((g_n)_\mathcal{U})((f_n)_\mathcal{U}) = (f_n \circ g_n^{-1})_\mathcal{U}$. We easily observe that this is a well-defined action that preserves the metric $d_\mathcal{U}$. We shall call $\mathbb{S}^2_\mathcal{U}$ the \textit{second universal sofic action}.
\end{defn}

\begin{lem}
There is an isometric bijection $\iota: \mathbb{X}^2_\mathcal{U} \rightarrow \mathbb{X}^1_\mathcal{U}$ which is equivariant under the two universal sofic actions.
\end{lem}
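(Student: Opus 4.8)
The plan is to construct $\iota$ by observing that both $\mathbb{X}^1_\mathcal{U}$ and $\mathbb{X}^2_\mathcal{U}$ arise from sequences of maps $[n] \to \mathbb{N}$, the difference being that $\mathbb{X}^2_\mathcal{U}$ takes the metric ultraproduct at the level of these maps (identifying maps that agree on a set of asymptotic density $1$) while $\mathbb{X}^1_\mathcal{U}$ first passes to the algebraic ultraproduct to get \emph{liftable} maps $\prod_\mathcal{U}([n],\mu_n) \to \prod_\mathcal{U}\mathbb{N}$ and then identifies those that agree almost everywhere with respect to the Loeb measure. So the first step is to define, for a sequence $(f_n)$ with each $f_n : [n] \to \mathbb{N}$, the liftable map $\hat{f} : \prod_\mathcal{U}([n],\mu_n) \to \prod_\mathcal{U}\mathbb{N}$ by $\hat{f}((x_n)_\mathcal{U}) = (f_n(x_n))_\mathcal{U}$, and then set $\iota((f_n)_\mathcal{U}) = [\hat{f}]$.

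Next I would check this is well-defined and injective simultaneously, by verifying the key identity
\begin{equation*}
\mu_\mathcal{U}\{x : \hat{f}(x) \neq \hat{g}(x)\} = \lim_\mathcal{U} d_n(f_n, g_n) = d_\mathcal{U}((f_n)_\mathcal{U}, (g_n)_\mathcal{U}).
\end{equation*}
This follows because $\{x : \hat f(x) \neq \hat g(x)\} = (A_n)_\mathcal{U}$ where $A_n = \{i \in [n] : f_n(i) \neq g_n(i)\}$, so its Loeb measure is $\lim_\mathcal{U}\mu_n(A_n) = \lim_\mathcal{U}|A_n|/n$, which is exactly the ultralimit of the normalized Hamming distances. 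This single computation shows $\iota$ is a well-defined isometry (hence injective). For surjectivity, I would take an arbitrary liftable map $\prod_\mathcal{U}([n],\mu_n) \to \prod_\mathcal{U}\mathbb{N}$, use the definition of liftable to get a representing sequence $\varphi_n : [n] \to \mathbb{N}$, and observe that $(\varphi_n)_\mathcal{U} \in \mathbb{X}^2_\mathcal{U}$ maps to its class under $\iota$; every class in $\mathbb{X}^1_\mathcal{U}$ is by definition the class of some liftable map, so $\iota$ is onto.

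Finally I would verify equivariance. Given $(g_n)_\mathcal{U}$ in the universal sofic group, the action $\mathbb{S}^2_\mathcal{U}$ sends $(f_n)_\mathcal{U}$ to $(f_n \circ g_n^{-1})_\mathcal{U}$, whose image under $\iota$ is the class of the liftable map $(x_n)_\mathcal{U} \mapsto (f_n(g_n^{-1}(x_n)))_\mathcal{U}$. On the other side, $\mathbb{S}^1_\mathcal{U}$ acts on $[\hat f]$ by pre-composition with the inverse of the Loeb-space automorphism induced by $(g_n)_\mathcal{U}$, i.e. by $(x_n)_\mathcal{U} \mapsto (g_n^{-1}(x_n))_\mathcal{U}$; composing $\hat f$ with this gives exactly the same liftable map. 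Hence $\iota \circ \mathbb{S}^2_\mathcal{U}((g_n)_\mathcal{U}) = \mathbb{S}^1_\mathcal{U}((g_n)_\mathcal{U}) \circ \iota$, as required.

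I do not expect a serious obstacle here; the statement is essentially a bookkeeping lemma identifying two models of the same object. The one point requiring a little care is making sure the sets $\{x : \hat f(x) \neq \hat g(x)\}$ really are of the form $(A_n)_\mathcal{U}$ so that the Loeb measure computation is legitimate — but this is already asserted in the definition of $\mathbb{X}^1_\mathcal{U}$, so it may be cited rather than reproved. A second minor point is confirming that the Loeb-measure automorphism of $\prod_\mathcal{U}([n],\mu_n)$ induced by $(g_n)_\mathcal{U}$ is literally the map $(x_n)_\mathcal{U} \mapsto (g_n(x_n))_\mathcal{U}$, which is immediate from how that action was defined.
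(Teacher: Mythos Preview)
Your proposal is correct and follows exactly the paper's approach: the paper defines $\iota((f_n)_\mathcal{U})((x_n)_\mathcal{U}) = (f_n(x_n))_\mathcal{U}$ and then simply says ``it is easy to verify that this indeed satisfies the conditions of the lemma,'' whereas you have spelled out those verifications (isometry via the Loeb-measure computation, surjectivity from the definition of liftable, and equivariance) in detail.
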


\begin{proof} For each $(f_n)_\mathcal{U} \in \mathbb{X}^2_\mathcal{U}$, defined $\iota((f_n)_\mathcal{U})$ to be the map $\prod_\mathcal{U} ([n], \mu_n) \rightarrow \prod_\mathcal{U} \mathbb{N}$ defined by $\iota((f_n)_\mathcal{U})((x_n)_\mathcal{U}) = (f_n(x_n))_\mathcal{U}$. It is easy to verify that this indeed satisfies the conditions of the lemma.
\end{proof}

In light of the lemma above, we shall simply identify the two universal sofic actions and denote this action by $\mathbb{S}_\mathcal{U}: \prod_\mathcal{U} (S_n, d) \curvearrowright \mathbb{X}_\mathcal{U}$. Now  we present the natural ultraproduct definition of sofic actions. 

\begin{prop}\label{ultra-prod}
Let $G$ be a countable discrete group, $X$ be a countable discrete set, $\alpha: G \curvearrowright X$ be an action. The following are equivalent,
\begin{enumerate}
    \item $\alpha$ is sofic;
    \item There exists a free ultrafilter $\mathcal{U}$ on $\mathbb{N}$, a group homomorphism $\varphi: G \rightarrow \prod_\mathcal{U} (S_n, d)$, and a map $\pi: X \rightarrow \mathbb{X}_\mathcal{U}$ s.t. $\mathbb{S}_\mathcal{U}(\varphi(g))(\pi(x)) = \pi(\alpha(g)x)$ for all $x \in X$, $g \in G$ and s.t. $d_\mathcal{U}(\pi(x), \pi(y)) = 1$ for all $x \neq y \in X$.
\end{enumerate}
\end{prop}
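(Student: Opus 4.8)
The plan is to prove the two implications by the standard translation between finitary approximations and the metric ultraproduct, working with the concrete model $\mathbb{X}_\mathcal{U}=\mathbb{X}^2_\mathcal{U}=\prod_\mathcal{U}(X_n,d_n)$ supplied by the lemma above, where $X_n$ is the set of maps $[n]\to\mathbb{N}$ with the normalized Hamming distance. In this model a homomorphism $\varphi\colon G\to\prod_\mathcal{U}(S_n,d)$ and a map $\pi\colon X\to\mathbb{X}_\mathcal{U}$ are represented by sequences $(\varphi_n(g))_n$ and $(f^x_n)_n$, and the action is literally $\mathbb{S}_\mathcal{U}((\varphi_n(g))_\mathcal{U})((f^x_n)_\mathcal{U})=(f^x_n\circ\varphi_n(g)^{-1})_\mathcal{U}$; the whole argument is the bookkeeping that matches this against Definition \ref{main defn}(5).

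For (2)$\Rightarrow$(1), fix finite $F\subseteq G$, $E\subseteq X$ and $\epsilon>0$. Lift $\varphi$ and $\pi$ as above; since replacing $\varphi_n(1_G)$ by the identity changes nothing in the ultraproduct we may take $\varphi_n$ unital for every $n$. The three hypotheses then say, on a $\mathcal{U}$-large set of $n$: (i) $d(\varphi_n(gh),\varphi_n(g)\varphi_n(h))$ is small for $g,h\in F$; (ii) for $x\ne y$ in $E$, the fraction of $i\in[n]$ with $f^x_n(i)=f^y_n(i)$ is small; and (iii) rewriting $\mathbb{S}_\mathcal{U}(\varphi(g))\pi(x)=\pi(\alpha(g)x)$ as $f^x_n\circ\varphi_n(g)^{-1}\approx f^{\alpha(g)x}_n$, substituting $g\mapsto g^{-1}$, and using $\varphi_n(g^{-1})^{-1}\approx\varphi_n(g)$ (valid since $\varphi$ is a homomorphism and post-composition by a permutation is a Hamming isometry): for $g\in F$, $x\in E$, the fraction of $s\in[n]$ with $f^x_n(\varphi_n(g)s)\ne f^{\alpha(g^{-1})x}_n(s)$ is small. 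These being finitely many $\mathcal{U}$-large conditions, pick one $n$ realizing all of them with errors so small that the set $S$ of $s\in[n]$ such that $x\mapsto f^x_n(s)$ is injective on $E$ and $f^x_n(\varphi_n(g)s)=f^{\alpha(g^{-1})x}_n(s)$ for all $g\in F$, $x\in E$, has $|S|>(1-\epsilon)|[n]|$ (a union bound over the finitely many excluded events). Then $A=[n]$, $\varphi=\varphi_n$, $B=\{f^x_n(s):x\in E,\ s\in[n]\}$ (finite), and $\pi_s(x)=f^x_n(s)$ for $s\in S$ witness the $(F,E,\epsilon)$-orbit approximation: injectivity of $\pi_s$ and the compatibility $\pi_{\varphi(g)s}(x)=\pi_s(\alpha(g^{-1})x)$ are exactly the two defining clauses of $S$, and $(F,\epsilon)$-multiplicativity and unitality hold by construction.

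For (1)$\Rightarrow$(2), enumerate $G=\{g_1,g_2,\dots\}$, $X=\{x_1,x_2,\dots\}$, and for each $k$ apply soficity of $\alpha$ with $F_k=\{g_1,\dots,g_k\}$, $E_k=\{x_1,\dots,x_k\}$, $\epsilon_k=1/k$ to get a unital map $\psi^{(k)}\colon G\to\Sym(A_k)$ that is $(F_k,\epsilon_k)$-multiplicative and an $(F_k,E_k,\epsilon_k)$-orbit approximation of $\alpha$, with associated finite set $B_k$, subset $S_k\subseteq A_k$, $|S_k|>(1-\epsilon_k)|A_k|$, and injections $\pi^{(k)}_s\colon E_k\hookrightarrow B_k$. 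Replacing $\psi^{(k)}$ by $k$ disjoint copies of itself — which changes neither Hamming distances nor the relevant proportions, hence preserves all these properties — we may assume $m_k:=|A_k|\to\infty$, and after passing to a subsequence $m_1<m_2<\cdots$. Identify $A_k$ with $[m_k]$, fix a free ultrafilter $\mathcal{U}$ on $\mathbb{N}$ with $\{m_k:k\ge 1\}\in\mathcal{U}$, and set $\psi_n=\psi^{(k)}$ if $n=m_k$ and $\psi_n=\mathrm{id}$ otherwise (the value off $\{m_k\}$ is irrelevant to $\mathcal{U}$). Then $\varphi(g):=(\psi_n(g))_\mathcal{U}$ is a homomorphism: for fixed $g,h$ and $n=m_k$ with $k$ exceeding the indices of $g,h$, $(F_k,\epsilon_k)$-multiplicativity and unitality give $d(\psi_n(gh),\psi_n(g)\psi_n(h))<1/k\to 0$ and $\psi_n(1_G)=1$. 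Fixing injections $\iota_k\colon B_k\hookrightarrow\mathbb{N}$, define $f^x_n\colon[n]\to\mathbb{N}$ by $f^x_n(s)=\iota_k(\pi^{(k)}_s(x))$ when $n=m_k$, $x\in E_k$, $s\in S_k$, and $f^x_n\equiv 0$ otherwise, and put $\pi(x):=(f^x_n)_\mathcal{U}$. For $x\ne y$ and any $n=m_k$ with $k$ past the indices of $x,y$, injectivity of the $\pi^{(k)}_s$ forces $f^x_n\ne f^y_n$ on all of $S_k$, so $d_n(f^x_n,f^y_n)>1-1/k$ on a cofinite-in-$\{m_k\}$ set; hence $d_\mathcal{U}(\pi(x),\pi(y))=1$. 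Finally, for $g\in G$, $x\in X$ and $n=m_k$ with $k$ past the indices of $g$, $x$, and $\alpha(g)x$, the compatibility clause of $\psi^{(k)}$ applied with $y=\alpha(g)x$ gives $\pi^{(k)}_{\psi^{(k)}(g)t}(\alpha(g)x)=\pi^{(k)}_t(\alpha(g^{-1})\alpha(g)x)=\pi^{(k)}_t(x)$ for every $t\in S_k$ with $\psi^{(k)}(g)t\in S_k$, i.e.\ $f^{\alpha(g)x}_n\circ\psi_n(g)$ agrees with $f^x_n$ off a set of size $<2m_k/k$, equivalently $f^x_n\circ\psi_n(g)^{-1}\approx f^{\alpha(g)x}_n$; passing to $\mathcal{U}$ yields $\mathbb{S}_\mathcal{U}(\varphi(g))(\pi(x))=\pi(\alpha(g)x)$. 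This establishes (2).

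The only genuinely delicate point is reconciling index sets: the universal sofic group in the excerpt is built from the full sequence $(S_n)_{n\in\mathbb{N}}$, whereas soficity of $\alpha$ only hands us approximations on sets $A_k$ of uncontrolled size. The amplification-by-disjoint-copies trick (forcing $m_k\to\infty$) together with choosing $\mathcal{U}$ concentrated on $\{m_k\}$ — equivalently, first forming $\prod_\mathcal{U}(S_{m_k},d)$ and identifying it with the part of $\prod_{\mathcal{V}}(S_n,d)$ seen by an ultrafilter $\mathcal{V}$ supported on $\{m_k\}$ — resolves it; everything else is the routine dictionary between the finitary clauses of Definition \ref{main defn}(5) and their ultraproduct reformulations, exactly parallel to the proof that a group is sofic iff it embeds into a universal sofic group.
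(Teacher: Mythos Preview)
Your proof is correct and follows essentially the same approach as the paper's: in both directions the argument is the standard dictionary between the finitary clauses of Definition~\ref{main defn} and their ultraproduct reformulations, with the index-set mismatch handled by inflating $|A_k|$ and concentrating the ultrafilter on the resulting sequence. The only cosmetic differences are that you make explicit the step $\varphi_n(g^{-1})^{-1}\approx\varphi_n(g)$ (which the paper uses tacitly when it writes that $\mathbb{S}_\mathcal{U}(\varphi(g^{-1}))(\pi(x))$ is represented by $\pi_{x,n}\circ\varphi_n(g)$), and that in $(2)\Rightarrow(1)$ you intersect over all $(g,x)\in F\times E$ rather than only those with $\alpha(g^{-1})x\in E$ --- a harmless over-intersection since the equivariance hypothesis holds for every $x\in X$.
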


\begin{proof} $(\Rightarrow)$ Fix increasing sequences of finite subsets $F_1 \subseteq F_2 \subseteq \cdots \subseteq G$ and $E_1 \subseteq E_2 \subseteq \cdots \subseteq X$ s.t. $\cup_i F_i = G$, $\cup_i E_i = X$. Fix a decreasing sequence $\epsilon_i > 0$ s.t. $\lim_i \epsilon_i = 0$. For each $i$, let $\varphi_i: G \rightarrow \Sym(A_i)$ be a unital, $(F_i, \epsilon_i)$-multiplicative, and an $(F_i, E_i, \epsilon_i)$-orbit approximation of $\alpha$. By taking the Cartesian products of $A_i$ with auxiliary finite sets if necessary, we may assume $|A_i|$ is strictly increasing. By definition of $(F_i, E_i, \epsilon_i)$-orbit approximation, there exists a finite set $B_i$ and a subset $S_i \subseteq A_i$ s.t. $|S_i| > (1 - \epsilon_i)|A_i|$ and for each $s \in S_i$ there is an injective map $\pi^i_s: E_i \hookrightarrow B_i$ s.t. $\pi^i_{\varphi_i(g)s}(x) = \pi^i_s(\alpha(g^{-1})x)$ for all $s \in S_i$, $g \in F_i$, $x \in E_i$, whenever $\varphi_i(g)s \in S_i$ and $\alpha(g^{-1})x \in E_i$. By embedding $B_i$ into $\mathbb{N}$ we may take the co-domain of $\pi^i_s$ to be $\mathbb{N}$. Let $\mathcal{U}$ be any free ultrafilter on $\mathbb{N}$ containing the set $\{|A_i|: i \geq 1\}$. We may then define $\varphi: G \rightarrow \prod_\mathcal{U} (S_n, d)$ by defining $\varphi(g) = (g_n)_\mathcal{U}$ with $g_n = \varphi_i(g)$ whenever $n = |A_i|$ and $g_n = 1$ otherwise. Since $\{|A_i|: i \geq 1\} \in \mathcal{U}$ and $\varphi_i$ is $(F_i, \epsilon_i)$-multiplicative, we see that $\varphi$ is a group homomorphism.

We then define $\pi: X \rightarrow \mathbb{X}_\mathcal{U}$ as follows: For each $x \in X$, $n \in \mathbb{N}$, define $\pi_{x, n}: [n] \rightarrow \mathbb{N}$,
\begin{equation*}
    \pi_{x, n}(s) = \begin{cases}
        \pi^i_s(x), \textrm{if }n = |A_i|\textrm{ and }x \in E_i\textrm{ and }s \in S_i\\
        0, \textrm{otherwise}
    \end{cases}
\end{equation*}
For each $x \in X$, $\pi(x)$ shall be represented by the sequence of maps $\pi_{x, n}$. We observe that as $\{|A_i|: i \geq 1\} \in \mathcal{U}$, it does not matter how $\pi_{x, n}$ is defined when $n \notin \{|A_i|: i \geq 1\}$. Now, let $x \neq y \in X$, then for large enough $i$ we have $x, y \in E_i$. For $s \in S_i$, we then have $\pi_{x, |A_i|}(s) = \pi^i_s(x) \neq \pi^i_s(y) = \pi_{y, |A_i|}(s)$ as $\pi^i_s$ is injective. As $\frac{|S_i|}{|A_i|} > 1 - \epsilon_i \rightarrow 1$, $d_{|A_i|}(\pi_{x, |A_i|}, \pi_{y, |A_i|}) \geq \frac{|S_i|}{|A_i|} \rightarrow 1$, so $d_\mathcal{U}(\pi(x), \pi(y)) = 1$.

Finally, fix $x \in X$, $g \in G$. Then for large $i$, $x \in E_i$, $g^{-1} \in F_i$, and $\alpha(g)x \in E_i$. Now, for any $s \in S_i \cap \varphi_i(g^{-1})^{-1}S_i$, by definition of $\pi_{x, n}$ and $(F_i, E_i, \epsilon_i)$-orbit approximation we have,
\begin{equation*}
    \pi_{x, |A_i|}(\varphi_i(g^{-1})s) = \pi^i_{\varphi_i(g^{-1})s}(x) = \pi^i_s(\alpha(g)x) = \pi_{\alpha(g)x, |A_i|}(s)
\end{equation*}

Since $\frac{|S_i \cap \varphi_i(g^{-1})^{-1}S_i|}{|A_i|} > 1 - 2\epsilon_i \rightarrow 1$, this means the maps given by $\mathbb{S}_\mathcal{U}(\varphi(g))(\pi(x))$ and by $\pi(\alpha(g)x)$ coincide on a set of measure 1 in $\prod_\mathcal{U} ([n], \mu_n)$, whence they are identified in $\mathbb{X}_\mathcal{U}$. This proves the claim.

$(\Leftarrow)$ For each $g \in G$, we shall write $\varphi(g) = (g_n)_\mathcal{U}$. Since $\varphi(1_G) = 1$, we shall in particular choose $\varphi(1_G) = (1)_\mathcal{U}$. Let $\varphi_n: G \rightarrow S_n$ be defined by $\varphi_n(g) = g_n$. Now, fix finite $F \subseteq G$, $E \subseteq X$, and $\epsilon > 0$, we shall show that there exists $n$ s.t. $\varphi_n$ is unital, $(F, \epsilon)$-multiplicative, and an $(F, E, \epsilon)$-orbit approximation of $\alpha$. We observe that $\varphi_n$ is unital for all $n$. Since $F$ is finite and $\varphi$ is a group homomorphism, there exists $L_1 \in \mathcal{U}$ s.t. for all $n \in L_1$, $\varphi_n$ is $(F, \epsilon)$-multiplicative.

Now, we observe that, in the definition of $(F, E, \epsilon)$-orbit approximation of $\alpha$, it is not necessary that $B$ is a finite set, as, for an infinite $B$, we may simply restrict $B$ to $\cup_{s \in S} \pi_s(E)$ and the latter set is finite. Thus, we may set $B = \mathbb{N}$. Choose $\epsilon' > 0$ s.t. $1 - |E|^2\epsilon' - |F||E|\epsilon' \geq 1 - \epsilon$. Now, for each $x \in E$, we represent $\pi(x)$ as a sequence of maps $(\pi_{x, n})_\mathcal{U}$ with $\pi_{x, n}: [n] \rightarrow \mathbb{N} = B$. We first observe that, for any $x \neq y \in E$, as $d_\mathcal{U}(\pi(x), \pi(y)) = 1$, there exists $L_{2, x, y} \in \mathcal{U}$ s.t. $d_n(\pi_{x, n}, \pi_{y, n}) > 1 - \epsilon'$ for all $n \in L_{2, x, y}$. Let $L_2 = \cap_{x \neq y \in E} L_{2, x, y}$. Since $E$ is finite, $L_2 \in \mathcal{U}$. For each $n \in L_2$, let,
\begin{equation*}
    \tilde{S}_n = \cap_{x \neq y \in E} \{s \in [n]: \pi_{x, n}(s) \neq \pi_{y, n}(s)\}
\end{equation*}

By assumption $\frac{|\tilde{S}_n|}{n} > 1 - |E|^2\epsilon'$. For each $s \in \tilde{S}_n$, defined $\pi^n_s: E \rightarrow B$ by $\pi^n_s(x) = \pi_{x, n}(s)$. Then $\pi^n_s$ is injective for all $s \in \tilde{S}_n$.

Now, fix any $g \in F$, $x \in E$ with $\alpha(g^{-1})x \in E$. Recall that $\mathbb{S}_\mathcal{U}(\varphi(g^{-1}))(\pi(x)) = \pi(\alpha(g^{-1})x)$. $\mathbb{S}_\mathcal{U}(\varphi(g^{-1}))(\pi(x))$ is represented by the sequence of maps $\pi_{x, n} \circ \varphi_n(g)$ while $\pi(\alpha(g^{-1})x)$ is represented by the sequence of maps $\pi_{\alpha(g^{-1})x, n}$. Thus, there exists $L_{3, g, x} \in \mathcal{U}$ s.t. $d_n(\pi_{x, n} \circ \varphi_n(g), \pi_{\alpha(g^{-1})x, n}) < \epsilon'$ for all $n \in L_{3, g, x}$. Let $L_3 = \cap_{g \in F, x \in E \cap \alpha(g)E} L_{3, g, x}$. Again, as $F$ and $E$ are finite, $L_3 \in \mathcal{U}$. For any $n \in L_2 \cap L_3$, define,
\begin{equation*}
    S_n = \tilde{S}_n \cap \bigcap_{g \in F, x \in E \cap \alpha(g)E} \{s \in [n]: [\pi_{x, n} \circ \varphi_n(g)](s) = \pi_{\alpha(g^{-1})x, n}(s)\}
\end{equation*}

Then $\frac{|S_n|}{n} > 1 - |E|^2\epsilon' - |F||E|\epsilon' \geq 1 - \epsilon$. By definition, $\pi^n_{\varphi_n(g)s}(x) = \pi^n_s(\alpha(g^{-1})x)$ for all $s \in S_n$, $g \in F$, $x \in E$, whenever $\varphi_n(g)s \in S_n$ and $\alpha(g^{-1})x \in E$. This shows that for all $n \in L_1 \cap L_2 \cap L_3 \neq \varnothing$, $\varphi_n$ is unital, $(F, \epsilon)$-multiplicative, and an $(F, E, \epsilon)$-orbit approximation of $\alpha$.
\end{proof}

\begin{rem}
If we regard the discrete set $X$ as equipped with the discrete metric, i.e., $d_X(x, y) = 1$ whenever $x \neq y$, then the distance condition in the second condition of the proposition above can be reduced to saying that $\pi$ is isometric. This inspires the following generalization of sofic actions to actions on separable metric spaces:
\end{rem}

\begin{defn}
Let $G$ be a countable discrete group, $X$ be a separable metric space with diameter bounded by 1, $\alpha: G \curvearrowright X$ be an isometric action. Then $\alpha$ is called \textit{sofic} if there exists a free ultrafilter $\mathcal{U}$ on $\mathbb{N}$, a group homomorphism $\varphi: G \rightarrow \prod_\mathcal{U} (S_n, d)$, and an isometric embedding $\pi: X \rightarrow \mathbb{X}_\mathcal{U}$ s.t. $\mathbb{S}_\mathcal{U}(\varphi(g))(\pi(x)) = \pi(\alpha(g)x)$ for all $x \in X$, $g \in G$.
\end{defn}

In the case of countable sets equipped with the discrete metric, this simply reduces to our previous definition of sofic actions.

\begin{thm}\label{group being sofic if action is sofic}
Let $G$ be a countable discrete group, $H \leq G$ be a subgroup. If the left multiplication action $\alpha: G \curvearrowright G/H$ is sofic, then there exists a normal subgroup $N \trianglelefteq G$ s.t. $N \leq H$ and $G/N$ is sofic. In particular, if the left multiplication action $\alpha: G \curvearrowright G$ is sofic, then $G$ is sofic.
\end{thm}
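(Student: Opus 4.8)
The plan is to use the ultraproduct characterization from Proposition \ref{ultra-prod}. Suppose $\alpha : G \curvearrowright G/H$ is sofic, so we have a free ultrafilter $\mathcal{U}$, a group homomorphism $\varphi : G \to \prod_\mathcal{U}(S_n, d)$, and a map $\pi : G/H \to \mathbb{X}_\mathcal{U}$ with $\mathbb{S}_\mathcal{U}(\varphi(g))(\pi(xH)) = \pi(gxH)$ for all $g \in G$, $xH \in G/H$, and $d_\mathcal{U}(\pi(xH), \pi(yH)) = 1$ whenever $xH \neq yH$. Write $p = \pi(H) = \pi(1_G H) \in \mathbb{X}_\mathcal{U}$ for the base point. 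The natural candidate for $N$ is the kernel of $\varphi$, or rather a subgroup closely related to it; first I would verify that $N := \{g \in G : \varphi(g) = 1 \text{ in } \prod_\mathcal{U}(S_n,d)\}$ works, but the subtlety is whether $N \leq H$.

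First I would show $N \leq H$. If $g \in N$, then $\mathbb{S}_\mathcal{U}(\varphi(g))$ acts trivially on all of $\mathbb{X}_\mathcal{U}$ (since $\varphi(g) = 1$ acts as the identity), hence $\pi(gH) = \mathbb{S}_\mathcal{U}(\varphi(g))(\pi(H)) = \pi(H)$. By the separation property $d_\mathcal{U}(\pi(xH), \pi(yH)) = 1$ for $xH \neq yH$, this forces $gH = H$, i.e. $g \in H$. This is the key use of the injectivity/separation condition in Proposition \ref{ultra-prod}(2), and it is clean. Next, $N$ is normal in $G$ since it is the kernel of a homomorphism into a group. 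It remains to show $G/N$ is sofic. The induced map $\bar\varphi : G/N \to \prod_\mathcal{U}(S_n,d)$ is an injective group homomorphism, and the universal sofic group $\prod_\mathcal{U}(S_n,d)$ embeds any countable subgroup back into a genuine sofic approximation — more precisely, a countable group admitting an injective homomorphism into a metric ultraproduct of symmetric groups is sofic. I would cite or reproduce the standard fact (see \cite{surveysofic}) that $G/N$ is sofic iff it embeds into $\prod_\mathcal{U}(S_n, d)$ for some $\mathcal{U}$; here injectivity of $\bar\varphi$ is automatic since $N$ is exactly the preimage of the identity.

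The main obstacle — really the only nontrivial point — is confirming that the $d_\mathcal{U}$-separation of the points $\pi(xH)$ is strong enough to conclude $gH = H$ from $\pi(gH) = \pi(H)$: this is immediate here because $d_\mathcal{U}(\pi(gH),\pi(H)) = 1 \neq 0$ would be contradicted, so in fact the argument is short. For the last sentence, take $H = \{1_G\}$: then any normal $N \leq H$ must be trivial, so $G/N = G$ is sofic. I would also remark that one should double-check the passage from "injective homomorphism into $\prod_\mathcal{U}(S_n,d)$" to "sofic" does not require the embedding to respect any extra structure — it does not, as the metric ultraproduct of symmetric groups is the standard model for soficity, so a countable subgroup of it is sofic by restriction.
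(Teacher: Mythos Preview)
Your proposal is correct and follows essentially the same approach as the paper: take $N=\ker\varphi$, use the equivariance plus the separation condition on $\pi$ to force $N\leq H$, and conclude $G/N$ is sofic via its embedding into $\prod_\mathcal{U}(S_n,d)$. The paper's proof is identical in structure, just slightly terser.
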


\begin{proof} By Proposition \ref{ultra-prod}, there exists a free ultrafilter $\mathcal{U}$ on $\mathbb{N}$, a group homomorphism $\varphi: G \rightarrow \prod_\mathcal{U} (S_n, d)$, and an injective map $\pi: G/H \rightarrow \mathbb{X}_\mathcal{U}$ s.t. $\mathbb{S}_\mathcal{U}(\varphi(g))(\pi(x)) = \pi(\alpha(g)x)$ for all $x \in G/H$, $g \in G$. Take $N = \textrm{ker}(\varphi)$. Then $G/N$ embeds into $\prod_\mathcal{U} (S_n, d)$ and is thus sofic. Assume $N \not\leq H$. We may then let $g \in N \setminus H$. $\varphi(g) = 1$, so $\pi(H) = \mathbb{S}_\mathcal{U}(\varphi(g))(\pi(H)) = \pi(\alpha(g)H) = \pi(gH)$. As $\pi$ is injective, $H = gH$, a contradiction. Thus, we must have $N \leq H$.
\end{proof}

The converse of the ``in particular" part of the above theorem is also true. In fact, we shall prove a stronger result in Theorem \ref{thm-left-multi}. First, though, we need a lemma:

\begin{lem}\label{lem-reln-bw-defns}
Suppose $G$ is a sofic group. Let $F \subseteq G$ be a finite subset and $\epsilon > 0$. Then there exists a finite set $A$ and a unital, $(F, \epsilon)$-multiplicative map $\varphi: G \rightarrow \Sym(A)$ such that $|S| > (1 - \epsilon)|A|$, where we define,
\begin{equation*}
\begin{split}
    S_1 &= \{s \in A: \varphi(g)s \neq \varphi(h)s, \forall g, h \in F, g \neq h\},\\
    S_2 &= \{s \in A: \varphi(gh)s = \varphi(g)\varphi(h)s, \forall g, h \in F\}, and\\
    S &= S_1 \cap S_2.
\end{split}
\end{equation*}
\end{lem}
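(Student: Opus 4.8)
The plan is to invoke the definition of soficity for the group $G$ (Definition \ref{main defn}(4)) with a suitably enlarged finite set and a suitably smaller tolerance, and then observe that the separation property defining $S_1$ and the local multiplicativity defining $S_2$ are both automatically inherited, up to a controlled loss, from an approximation that is almost multiplicative and almost free on this larger set. Concretely, I would set $\tilde{F} = \{e\} \cup F \cup F^{-1} \cup FF \cup F^{-1}F$, which is again a finite subset of $G$, and fix $\delta > 0$ with $4|F|^2\delta < \epsilon$. Soficity of $G$ then yields a finite set $A$ and a unital, $(\tilde{F},\delta)$-multiplicative map $\varphi: G \to \Sym(A)$ with $d(1,\varphi(g)) > 1-\delta$ for every $g \in \tilde{F}\setminus\{e\}$. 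Since $F \subseteq \tilde{F}$ and $\delta < \epsilon$, this $\varphi$ is in particular unital and $(F,\epsilon)$-multiplicative, so only the estimate $|S| > (1-\epsilon)|A|$ remains.

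For $S_2$: for each of the at most $|F|^2$ pairs $g,h \in F$ we have $g,h,gh \in \tilde{F}$, so $(\tilde{F},\delta)$-multiplicativity gives $d(\varphi(gh),\varphi(g)\varphi(h)) < \delta$, i.e. the set of $s \in A$ with $\varphi(gh)s \neq \varphi(g)\varphi(h)s$ has size less than $\delta|A|$. A union bound yields $|A \setminus S_2| < |F|^2\delta|A|$.

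For $S_1$: fix distinct $g,h \in F$. Using bi-invariance of the normalized Hamming distance together with unitality, $d(\varphi(g)^{-1},\varphi(g^{-1})) = d(1,\varphi(g)\varphi(g^{-1})) = d(\varphi(gg^{-1}),\varphi(g)\varphi(g^{-1})) < \delta$ by $(\tilde{F},\delta)$-multiplicativity applied to $g,g^{-1} \in \tilde{F}$; and $d(\varphi(g^{-1})\varphi(h),\varphi(g^{-1}h)) < \delta$ since $g^{-1},h,g^{-1}h \in \tilde{F}$. Hence $d(\varphi(g)^{-1}\varphi(h),\varphi(g^{-1}h)) < 2\delta$ by right-invariance and the triangle inequality. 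Since $g^{-1}h \in \tilde{F}\setminus\{e\}$ we have $d(1,\varphi(g^{-1}h)) > 1-\delta$, and therefore $d(1,\varphi(g)^{-1}\varphi(h)) > 1-3\delta$. As $\varphi(g)^{-1}\varphi(h)s = s$ is equivalent to $\varphi(g)s = \varphi(h)s$, the set of $s \in A$ with $\varphi(g)s = \varphi(h)s$ has size less than $3\delta|A|$; a union bound over the at most $|F|^2$ ordered pairs gives $|A\setminus S_1| < 3|F|^2\delta|A|$. Combining, $|A\setminus S| \le |A\setminus S_1| + |A\setminus S_2| < 4|F|^2\delta|A| < \epsilon|A|$, which is the claim.

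The only mildly delicate point is the chain of estimates in the $S_1$ part: one must make sure that the comparison between $\varphi(g)^{-1}\varphi(h)$ and $\varphi(g^{-1}h)$ only ever invokes distances between values of $\varphi$ at elements of $\tilde{F}$, so that it stays controlled by $\delta$; this is why $\tilde{F}$ must contain $F^{-1}$, $FF$, and $F^{-1}F$. Everything else is a routine union bound, and choosing $\delta$ small enough at the start absorbs all the constants.
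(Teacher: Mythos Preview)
Your argument is correct and is essentially the same as the paper's: the paper first replaces $F$ by a symmetric set containing the identity (so that $F' = F\cdot F$ already contains $F^{-1}$ and $F^{-1}F$) and then runs exactly the same chain of estimates with $\epsilon' = \epsilon/(4|F|^2)$ in place of your $\delta$. The only cosmetic difference is that you build the enlarged set $\tilde{F}$ explicitly rather than via the WLOG symmetrization, which makes the role of each piece of $\tilde{F}$ a bit more transparent.
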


\begin{proof} Assume WLOG that $F \subseteq G$ is a symmetric finite subset of $G$ containing the identity. Let $F' = F \cdot F = \{gh: g, h \in F\}$ and $\epsilon' = \frac{\epsilon}{4|F|^2}$.

Since $G$ is sofic, there exists a finite set $A$ and a map $\varphi: G \rightarrow \Sym(A)$ which is unital and $(F', \epsilon')$-multiplicative and satisfies $d(1, \varphi(g)) > 1 - \epsilon'$ for all non-identity $g \in F'$. 

Fix $g, h \in F$, $g \neq h$, then, since $F$ is symmetric and since $\varphi$ is unital and $(F', \epsilon')$-multiplicative, we have,
\begin{equation*}
\begin{split}
    d(\varphi(g)^{-1}, \varphi(g^{-1})) &= d(\varphi(g)\varphi(g)^{-1}, \varphi(g)\varphi(g^{-1}))\\
    &= d(1, \varphi(g)\varphi(g^{-1}))\\
    &= d(\varphi(gg^{-1}), \varphi(g)\varphi(g^{-1}))\\
    &< \epsilon'.
\end{split}
\end{equation*}

Thus, since $g^{-1}h \in F'$ and is not the identity,
\begin{equation*}
\begin{split}
    d(1, \varphi(g)^{-1}\varphi(h)) &\geq d(1, \varphi(g^{-1}h)) - d(\varphi(g^{-1}h), \varphi(g^{-1})\varphi(h)) - d(\varphi(g^{-1})\varphi(h), \varphi(g)^{-1}\varphi(h))\\
    &> 1 - \epsilon' - \epsilon' - d(\varphi(g^{-1}), \varphi(g)^{-1})\\
    &> 1 - 3\epsilon'.
\end{split}
\end{equation*}

Hence, $|\{s \in A: \varphi(g)s \neq \varphi(h)s\}| = |\{s \in A: s \neq \varphi(g)^{-1}\varphi(h)s\}| > (1 - 3\epsilon')|A|$. So,
\begin{equation*}
    |S_1| = |\bigcap_{\begin{smallmatrix}
    g, h \in F\\
    g \neq h
    \end{smallmatrix}} \{s \in A: \varphi(g)s \neq \varphi(h)s\}| > (1 - 3|F|^2\epsilon')|A|
\end{equation*}

We also observe that, for any fixed $g, h \in F$, $|\{s \in A: \varphi(gh)s = \varphi(g)\varphi(h)s\}| > (1 - \epsilon')|A|$. Therefore,
\begin{equation*}
    |S_2| = |\bigcap_{g, h \in F} \{s \in A: \varphi(gh)s = \varphi(g)\varphi(h)s\}| > (1 - |F|^2\epsilon')|A|
\end{equation*}

As such, $|S| = |S_1 \cap S_2| > (1 - 4|F|^2\epsilon')|A| = (1 - \epsilon)|A|$ as desired.
\end{proof}

\begin{thm}\label{thm-left-multi}
Let $G$ be a sofic group, $N \leq G$ be a locally finite subgroup of $G$. Then the left multiplication action $\alpha: G \curvearrowright G/N$ is sofic.
\end{thm}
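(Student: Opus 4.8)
The plan is to verify the finitary Definition \ref{main defn}(5) directly, feeding in Lemma \ref{lem-reln-bw-defns} to obtain a sofic approximation $\varphi\colon G\to\Sym(A)$ of $G$ that is \emph{genuinely} multiplicative and \emph{genuinely} free on a set $S^0\subseteq A$ with $|S^0|>(1-\epsilon')|A|$. The conceptual point is that any finite piece of data $(F,E,\epsilon)$ only ``sees'' finitely much of $N$, so the action $G\curvearrowright G/N$ can be modeled on an honest free action of a suitable finite subgroup $N_0\le N$ that we carve out of $\varphi$.

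Fix finite symmetric $F\ni1$, a finite $E=\{g_1N,\dots,g_kN\}\subseteq G/N$ with chosen representatives $g_i$, and $\epsilon>0$. First I would let $P=\{g_j^{-1}fg_i:1\le i,j\le k,\ f\in F\cup\{1\}\}\cap N$ and $N_0=\langle P\rangle$; since $N$ is locally finite and $P$ is finite, $N_0$ is a finite subgroup, rigged so that every relation $g^{-1}g_iN=g_jN$ with $g\in F$ is witnessed by an element of $N_0$. Set $\tilde F=F\cup\{1\}\cup\{g_i^{\pm1}\}\cup N_0$, $F'=\tilde F\cdot\tilde F\cdot\tilde F$, and pick $\epsilon'>0$ small of order $\epsilon/(k|N_0|)$. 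Lemma \ref{lem-reln-bw-defns} applied to $(F',\epsilon')$ yields a finite $A$, a unital $(F',\epsilon')$-multiplicative $\varphi\colon G\to\Sym(A)$, and $S^0\subseteq A$ with $|S^0|>(1-\epsilon')|A|$ such that for all $s\in S^0$: $\varphi(g)s\neq\varphi(h)s$ for distinct $g,h\in F'$, and $\varphi(gh)s=\varphi(g)\varphi(h)s$ for $g,h\in F'$.

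Next I would extract an honest free $N_0$-set by setting $A_0=\{s\in A:\varphi(n)s\in S^0\ \text{for all }n\in N_0\}$. A short orbit-chasing argument, using exact multiplicativity and separation on $S^0$, shows $A_0$ is \emph{exactly} $\varphi(n)$-invariant for every $n\in N_0$, that $\varphi|_{N_0}\colon N_0\to\Sym(A_0)$ is an honest homomorphism whose action on $A_0$ is free, and that $|A\setminus A_0|<|N_0|\,\epsilon'|A|$. Let $B=A_0/\varphi(N_0)$ be the orbit space. Shrink the good set to $S=S^0\cap\{s\in A:\varphi(g_i^{-1})s\in A_0\ \text{for all }i\}$, still of size $>(1-\epsilon)|A|$ for $\epsilon'$ small, and for $s\in S$ define $\pi_s\colon E\to B$ by sending $g_iN$ to the $\varphi(N_0)$-orbit of $\varphi(g_i^{-1})s$ (which lies in $A_0$). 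Injectivity of $\pi_s$ on $E$ will follow from the separation property on $S^0$ together with the fact that $g_iN\ne g_jN$ forces $g_in\notin g_jN$ for all $n\in N$. For the orbit-approximation identity $\pi_{\varphi(g)s}(g_iN)=\pi_s(\alpha(g^{-1})(g_iN))$ with $g\in F$: there is nothing to prove unless $g^{-1}g_iN=g_jN\in E$, in which case $m:=g_j^{-1}g^{-1}g_i\in P\subseteq N_0$ and $g_i^{-1}g=m^{-1}g_j^{-1}$, so two applications of multiplicativity at $s\in S^0$ give $\varphi(g_i^{-1})(\varphi(g)s)=\varphi(m^{-1})(\varphi(g_j^{-1})s)$, which is in the same $\varphi(N_0)$-orbit as $\varphi(g_j^{-1})s$ — exactly the required equality in $B$. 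As $\varphi$ is unital and $(F,\epsilon)$-multiplicative, this verifies Definition \ref{main defn}(5), so $\alpha$ is sofic.

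The step I expect to be the main obstacle is the construction of the honest free $\varphi(N_0)$-set $A_0$ — in particular checking that $A_0$ is \emph{exactly}, not merely approximately, invariant under each $\varphi(n)$, since this exactness is what lets $B$ and the maps $\pi_s$ be defined on the nose and turns the orbit-approximation identity into a genuine equality. The other delicate point is choosing $N_0$ and the representatives so that the coset relation $g^{-1}g_iN=g_jN$ translates cleanly into the group identity $g_i^{-1}g=m^{-1}g_j^{-1}$ exploited above; with $A_0$ in hand, the remaining density estimates are routine counting.
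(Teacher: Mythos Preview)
Your proof is correct and is essentially the paper's own argument: the paper likewise extracts a finite subgroup $N'=\langle U\rangle\le N$ with $U=\{\sigma(x)^{-1}g\,\sigma(g^{-1}x):x\in E,\ g\in F\}$, applies Lemma~\ref{lem-reln-bw-defns} to an enlarged $F'$ containing $N'$ and the section values, takes $B$ to be the $\varphi(N')$-orbit space of the good set, and defines $\pi_s(x)$ as the class of $\varphi(\sigma(x)^{-1})s$. The only cosmetic difference is that the paper defines the $\varphi(N')$-equivalence relation directly on $S'$ (checking it is an equivalence relation using the $S_2$ property) rather than first carving out an exactly $\varphi(N_0)$-invariant subset $A_0$; your flagged ``main obstacle'' is therefore not a real obstacle and is handled exactly as you outline, via multiplicativity on $S^0$.
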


\begin{proof} Let $F \subseteq G$, $E \subseteq G/N$ be finite subsets. Fix $\epsilon > 0$. Let $q: G \rightarrow G/N$ be the natural quotient map. Fix $\sigma: G/N \rightarrow G$ an arbitrary section of $q$. Let $U = \{\sigma(x)^{-1}g\sigma(g^{-1}x): x \in E, g \in F\}$. We observe that $U$ is finite and $U \subseteq N$. Let $N' = \langle U \rangle$. Since $N$ is locally finite, $N'$ is a finite group. Let,
\begin{equation*}
    F' = F \cup N' \cup (N' \cdot \sigma(E)^{-1})
\end{equation*}

We observe that since $N'$ is a subgroup, $1_G \in N'$, so $\sigma(E)^{-1} \subseteq N' \cdot \sigma(E)^{-1} \subseteq F'$. Let $\epsilon' = \frac{\epsilon}{|E| + 1}$. Since $F'$ is finite, by Lemma \ref{lem-reln-bw-defns}, there is a finite set $A$ and a unital, $(F', \epsilon')$-multiplicative map $\varphi: G \rightarrow \Sym(A)$ such that $|S'| > (1 - \epsilon')|A|$, where we define,
\begin{equation*}
\begin{split}
    S_1 &= \{s \in A: \varphi(g)s \neq \varphi(h)s, \forall g, h \in F', g \neq h\},\\
    S_2 &= \{s \in A: \varphi(gh)s = \varphi(g)\varphi(h)s, \forall g, h \in F'\}, and\\
    S' &= S_1 \cap S_2.
\end{split}
\end{equation*}

Now, on the set $S'$, we define a relation $s_1 \sim s_2$ if there exists $n \in N'$ s.t. $s_1 = \varphi(n)s_2$. Since $\varphi$ is unital, $\sim$ is reflexive. As $N'$ is a group, $N' \subseteq F'$, and by the definition of $S_2$, we see that $\sim$ is symmetric and transitive. Hence, $\sim$ is an equivalence relation. Let $B = S'/\sim$ and,
\begin{equation*}
    S = S' \cap \bigcap_{x \in E} \varphi(\sigma(x)^{-1})^{-1}S'
\end{equation*}

We observe that as $|S'| > (1 - \epsilon')|A|$ and $\epsilon' = \frac{\epsilon}{|E| + 1}$, we have $|S| > (1 - \epsilon)|A|$. Now, for $s \in S$, we define $\pi_s: E \hookrightarrow B$ to be $\pi_s(x) = [\varphi(\sigma(x)^{-1})s]_\sim$. We observe that this is injective. Indeed, assume $\pi_s(x) = \pi_s(y)$, i.e., $\varphi(\sigma(x)^{-1})s \sim \varphi(\sigma(y)^{-1})s$. Then there exists $n \in N'$ s.t. $\varphi(\sigma(x)^{-1})s = \varphi(n)\varphi(\sigma(y)^{-1})s$. Since $\sigma(E)^{-1} \subseteq F'$ and $N' \subseteq F'$, by the definition of $S_2$ we have $\varphi(\sigma(x)^{-1})s = \varphi(n)\varphi(\sigma(y)^{-1})s = \varphi(n\sigma(y)^{-1})s$. But then, as $n\sigma(y)^{-1} \in N' \cdot \sigma(E)^{-1} \subseteq F'$ and $\sigma(x)^{-1} \in \sigma(E)^{-1} \subseteq F'$, we have, by the definition of $S_1$, that $\sigma(x)^{-1} = n\sigma(y)^{-1}$. But then $\sigma(x) = \sigma(y)n^{-1} \in \sigma(y)N' \subseteq \sigma(y)N = y$, i.e., $x = y$. This shows that $\pi_s$ must be injective.

Finally, for all $s \in S$, $g \in F$, $x \in E$, if $\varphi(g)s \in S$ and $\alpha(g^{-1})x = g^{-1}x \in E$, then as $\sigma(E)^{-1} \subseteq F'$ and $F \subseteq F'$, and by the definition of $S_2$,
\begin{equation*}
    \pi_{\varphi(g)s}(x) = [\varphi(\sigma(x)^{-1})\varphi(g)s]_\sim = [\varphi(\sigma(x)^{-1}g)s]_\sim
\end{equation*}
and,
\begin{equation*}
    \pi_s(g^{-1}x) = [\varphi(\sigma(g^{-1}x)^{-1})s]_\sim
\end{equation*}

It now suffices to prove $\varphi(\sigma(x)^{-1}g)s \sim \varphi(\sigma(g^{-1}x)^{-1})s$. Let $n = \sigma(x)^{-1}g\sigma(g^{-1}x)$. By definition we have $n \in U \subseteq N'$. Since $g^{-1}x \in E$, as $\sigma(E)^{-1} \subseteq F'$ and $N' \subseteq F'$, we have, by definition of $S_2$,
\begin{equation*}
    \varphi(n)\varphi(\sigma(g^{-1}x)^{-1})s = \varphi(n\sigma(g^{-1}x)^{-1})s = \varphi(\sigma(x)^{-1}g)s
\end{equation*}

This concludes the proof.
\end{proof}

We record here some easy observations:

\begin{prop}\label{basic-prop}
\textrm{ }
\begin{enumerate}
    \item If $\alpha: G \curvearrowright X$ is the composition of a quotient map $q: G \rightarrow H$ and a sofic action $\beta: H \curvearrowright X$, then $\alpha$ is sofic;
    \item If $\alpha: G \curvearrowright X$ is sofic, then the restriction of $\alpha$ to each of its orbits is sofic;
    \item If $\alpha: G \curvearrowright X$ is sofic and $H$ is a subgroup of $G$, then $\alpha|_H$ is sofic;
    \item If $G_1 \subseteq G_2 \subseteq \cdots$ is an increasing sequence of subgroups of $G$ whose union is $G$, and $\alpha: G \curvearrowright X$ restricted to each $G_i$ is sofic, then $\alpha$ is sofic.
\end{enumerate}
\end{prop}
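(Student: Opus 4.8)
The plan is to derive parts (1)--(3) from the ultraproduct characterization in Proposition \ref{ultra-prod}, where the witnessing data transports almost verbatim, and to prove part (4) directly from the finitary Definition \ref{main defn}. For part (1), note that $\alpha(g) = \beta(q(g))$ for all $g \in G$. Since $\beta$ is sofic, Proposition \ref{ultra-prod} furnishes a free ultrafilter $\mathcal{U}$, a group homomorphism $\psi: H \to \prod_\mathcal{U}(S_n, d)$, and a map $\pi: X \to \mathbb{X}_\mathcal{U}$ with $\mathbb{S}_\mathcal{U}(\psi(h))(\pi(x)) = \pi(\beta(h)x)$ for all $h \in H$, $x \in X$, and $d_\mathcal{U}(\pi(x), \pi(y)) = 1$ whenever $x \neq y$. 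I would then take $\varphi := \psi \circ q : G \to \prod_\mathcal{U}(S_n, d)$, which is a group homomorphism, and keep the same $\pi$. Substituting $h = q(g)$ gives $\mathbb{S}_\mathcal{U}(\varphi(g))(\pi(x)) = \pi(\beta(q(g))x) = \pi(\alpha(g)x)$, and the distance condition is untouched, so Proposition \ref{ultra-prod} yields that $\alpha$ is sofic.

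For parts (2) and (3), fix $\mathcal{U}$, $\varphi: G \to \prod_\mathcal{U}(S_n,d)$, and $\pi: X \to \mathbb{X}_\mathcal{U}$ witnessing soficity of $\alpha$ as in Proposition \ref{ultra-prod}. For (2), if $Y \subseteq X$ is an $\alpha$-orbit then $Y$ is $\alpha$-invariant, so $\pi|_Y : Y \to \mathbb{X}_\mathcal{U}$ together with $\varphi$ still satisfies the equivariance identity and the pairwise-distance-one condition; this proves $\alpha|_Y$ sofic. For (3), if $H \leq G$ then $\varphi|_H : H \to \prod_\mathcal{U}(S_n,d)$ is still a group homomorphism and $\pi$ is unchanged; the equivariance identity for $g \in H$ and the distance condition both persist, so $\alpha|_H$ is sofic.

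For part (4), I would argue with the finitary definition. Given finite $F \subseteq G$, finite $E \subseteq X$, and $\epsilon > 0$, finiteness of $F$ together with $G = \bigcup_i G_i$ being an increasing union gives some $i$ with $F \subseteq G_i$. As $\alpha|_{G_i}$ is sofic, pick a finite set $A$ and a unital, $(F,\epsilon)$-multiplicative map $\varphi_0 : G_i \to \Sym(A)$ that is an $(F, E, \epsilon)$-orbit approximation of $\alpha|_{G_i}$, with witnessing data $B$, $S \subseteq A$, $\{\pi_s\}_{s \in S}$. Extend $\varphi_0$ to $\varphi : G \to \Sym(A)$ by setting $\varphi(g) = 1$ for $g \notin G_i$. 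Then $\varphi$ is unital since $1_G \in G_i$; it is $(F,\epsilon)$-multiplicative because $F \cdot F \subseteq G_i$ (as $G_i$ is a subgroup), so each quantity $d(\varphi(gh), \varphi(g)\varphi(h))$ with $g, h \in F$ equals the one already controlled for $\varphi_0$; and it is an $(F, E, \epsilon)$-orbit approximation of $\alpha$ with the same $B$, $S$, $\{\pi_s\}$, since that condition only involves $\varphi(g)$ and $\alpha(g^{-1})x$ for $g \in F \subseteq G_i$, where $\alpha$ agrees with $\alpha|_{G_i}$.

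I do not anticipate a genuine obstacle: parts (1)--(3) are formal consequences of transporting the ultraproduct data, and the only point in part (4) needing a moment's care is verifying that extending $\varphi_0$ by the identity off $G_i$ does not spoil $(F,\epsilon)$-multiplicativity, which works precisely because $G_i$ is closed under multiplication, so $F \cdot F$ stays inside $G_i$.
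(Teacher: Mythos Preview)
Your argument is correct. The paper does not actually supply a proof of this proposition: it is introduced with the phrase ``We record here some easy observations'' and the four items are simply stated. Your write-up, deriving (1)--(3) by transporting the ultraproduct data from Proposition~\ref{ultra-prod} and handling (4) with the finitary Definition~\ref{main defn} by passing to a sufficiently large $G_i$, is exactly the kind of routine verification the authors are leaving to the reader, and nothing in it is problematic.
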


The converse of item 2 of the above proposition is also true, which naturally reduces the study of sofic actions to transitive ones:

\begin{prop}\label{transitive-prop}
If the restriction of $\alpha: G \curvearrowright X$ to each of its orbits is sofic, then $\alpha$ is sofic.
\end{prop}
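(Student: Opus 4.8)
The plan is to reduce the statement about $\alpha\colon G\curvearrowright X$ to the finitary definition by assembling, for given finite data $F\subseteq G$, $E\subseteq X$, and $\epsilon>0$, a single approximation of $\alpha$ out of approximations of the orbit restrictions. Write $X=\bigsqcup_{j\in J}X_j$ as the partition into $G$-orbits. The set $E$ meets only finitely many orbits, say $X_{j_1},\dots,X_{j_k}$; put $E_i=E\cap X_{j_i}$ and let $\alpha_i\colon G\curvearrowright X_{j_i}$ be the (sofic, by hypothesis) restriction. For each $i$ apply the finitary definition of soficity of $\alpha_i$ with the same $F$, with $E_i$, and with $\epsilon$ to obtain a finite set $A_i$ and a unital, $(F,\epsilon)$-multiplicative map $\varphi_i\colon G\to\Sym(A_i)$ which is an $(F,E_i,\epsilon)$-orbit approximation of $\alpha_i$; let $B_i$ and $S_i\subseteq A_i$ with $|S_i|>(1-\epsilon)|A_i|$ and the injections $\pi^i_s\colon E_i\hookrightarrow B_i$ be the accompanying data.

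Next I would combine these. The natural guess $A=\prod_i A_i$ with $\varphi(g)=\prod_i\varphi_i(g)$ works: $\varphi$ is unital and $(F,\epsilon)$-multiplicative since each coordinate is, and the Hamming distance on a product of symmetric groups with the product permutation is controlled coordinatewise (a mismatch in the product forces a mismatch in some coordinate, so $d(\varphi(gh),\varphi(g)\varphi(h))\le\sum_i d(\varphi_i(gh),\varphi_i(g)\varphi_i(h))$ — here one should instead take $\varphi_i$ to be $(F,\epsilon/k)$-multiplicative from the start, and likewise each an $(F,E_i,\epsilon/k)$-orbit approximation, so that the combined errors stay below $\epsilon$). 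Set $B=\bigsqcup_i B_i$ (disjoint union), $S=\prod_i S_i$, and for $s=(s_i)_i\in S$ and $x\in E$, define $\pi_s(x)=\pi^i_{s_i}(x)$ where $i$ is the unique index with $x\in E_i=E\cap X_{j_i}$. Then $|S|/|A|=\prod_i(|S_i|/|A_i|)>(1-\epsilon/k)^k>1-\epsilon$. Each $\pi_s$ is injective: if $x\ne y$ lie in the same $E_i$ then $\pi^i_{s_i}$ separates them, and if they lie in different $E_i$'s their images land in different summands of $B$. Finally the orbit-approximation identity $\pi_{\varphi(g)s}(x)=\pi_s(\alpha(g^{-1})x)$ is inherited coordinatewise: for $x\in E_i$, both $x$ and $\alpha(g^{-1})x$ (when the latter is in $E$) lie in $X_{j_i}$, hence in $E_i$, so the relevant coordinate is the $i$-th one, and $\pi_{\varphi(g)s}(x)=\pi^i_{\varphi_i(g)s_i}(x)=\pi^i_{s_i}(\alpha_i(g^{-1})x)=\pi_s(\alpha(g^{-1})x)$, using that $\varphi_i$ is an $(F,E_i,\epsilon/k)$-orbit approximation of $\alpha_i$ and that $\varphi(g)s\in S\iff\varphi_i(g)s_i\in S_i$ for all $i$ (in particular for the one that matters).

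The only genuinely delicate point is the bookkeeping of error terms: since the number $k$ of orbits met by $E$ depends on $E$ (not a fixed constant), one must feed each $\alpha_i$ the tolerance $\epsilon/k$ rather than $\epsilon$, and check that $(F,\epsilon/k)$-multiplicativity of the factors really does yield $(F,\epsilon)$-multiplicativity of the product — this is where the subadditivity $d(\varphi(gh),\varphi(g)\varphi(h))\le\sum_i d(\varphi_i(gh),\varphi_i(g)\varphi_i(h))$ is used, which holds because $\varphi(gh)s$ and $\varphi(g)\varphi(h)s$ differ exactly when some coordinate differs. Everything else — injectivity of $\pi_s$, the measure bound on $S$, the equivariance identity — is a routine coordinatewise verification. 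One should also note at the outset that if $E=\varnothing$ the statement is vacuous, and that the construction does not require $J$ to be finite, only that $E$ meets finitely many orbits, which is automatic since $E$ is finite.
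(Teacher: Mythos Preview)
Your proposal is correct and follows essentially the same approach as the paper: restrict to the finitely many orbits meeting $E$, take the product $A=\prod_i A_i$ with the coordinatewise $\varphi$, set $S=\prod_i S_i$, $B=\coprod_i B_i$, and define $\pi_s$ piecewise. The only cosmetic difference is in the error bookkeeping---the paper picks $\epsilon'$ with $(1-\epsilon')^n\ge 1-\epsilon$ and uses the exact product formula $1-d(\varphi(gh),\varphi(g)\varphi(h))=\prod_i(1-d_i)$, whereas you use $\epsilon/k$ with the union-bound subadditivity; both work (note your inequality $(1-\epsilon/k)^k>1-\epsilon$ is only $\ge$ when $k=1$, but the strict inequality on each $|S_i|$ still yields $|S|>(1-\epsilon)|A|$).
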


\begin{proof} Fix finite subsets $F \subseteq G$, $E \subseteq X$, and $\epsilon > 0$. We need to show there exists a unital, $(F, \epsilon)$-multiplicative, $(F, E, \epsilon)$-orbit approximation of $\alpha$. Since $E$ is finite, it only intersects with finitely many orbits of $\alpha$, which we shall denote by $X_1, \cdots, X_n$. Let $E_i = E \cap X_i$. Choose $\epsilon' > 0$ s.t. $(1 - \epsilon')^n \geq 1 - \epsilon$. Since $\alpha|_{X_i}$ is sofic, there exists $\varphi_i: G \rightarrow \Sym(A_i)$ which is unital, $(F, \epsilon')$-multiplicative, and an $(F, E_i, \epsilon')$-orbit approximation of $\alpha|_{X_i}$. Define $\varphi: G \rightarrow \Sym(A_1 \times \cdots \times A_n)$ by,
\begin{equation*}
    \varphi(g)(a_1, \cdots, a_n) = (\varphi_1(g)a_1, \cdots \varphi_n(g)a_n)
\end{equation*}

It is clear that $\varphi$ is unital. We claim that $\varphi$ is $(F, \epsilon)$-multiplicative. Indeed, for $g, h \in F$,
\begin{equation*}
\begin{split}
    |A_1 \times \cdots \times A_n| \cdot (1 - d(\varphi(gh), \varphi(g)\varphi(h))) &= \prod_{i = 1}^n |\{a \in A_i: \varphi_i(gh)a = \varphi_i(g)\varphi_i(h)a\}|\\
    &= \prod_{i = 1}^n |A_i| \cdot (1 - d(\varphi_i(gh), \varphi_i(g)\varphi_i(h)))\\
    &> |A_1 \times \cdots \times A_n| \cdot (1 - \epsilon')^n\\
    &\geq |A_1 \times \cdots \times A_n| \cdot (1 - \epsilon)
\end{split}
\end{equation*}
so $d(\varphi(gh), \varphi(g)\varphi(h)) < \epsilon$.

We now show $\varphi$ is an $(F, E, \epsilon)$-orbit approximation of $\alpha$ to conclude the proof. We note that, as $\varphi_i$ is an $(F, E_i, \epsilon')$-orbit approximation of $\alpha|_{X_i}$, there exists a finite set $B_i$, $S_i \subseteq A_i$ and, for each $s \in S_i$, $\pi_s^i: E_i \hookrightarrow B_i$ satisfying the condition for an $(F, E_i, \epsilon')$-orbit approximation of $\alpha|_{X_i}$. Let $S = S_1 \times \cdots \times S_n$. Then $|S| > (1 - \epsilon')^n|A_1 \times \cdots \times A_n| \geq (1 - \epsilon)|A_1 \times \cdots \times A_n|$. Let $B = \coprod_{i=1}^n B_i$. For each $s = (s_1,  \cdots, s_n) \in S$, define $\pi_s: E = \coprod_{i=1}^n E_i \hookrightarrow B$ by,
\begin{equation*}
    \pi_s(x) = \pi_{s_i}^i(x), \textrm{when }x \in E_i
\end{equation*}

It is easy to see that the map is indeed injective. Finally, fix $s = (s_1, \cdots, s_n) \in S$, $g \in F$, $x \in E_i \subseteq E$. Assume $\varphi(g)s \in S$ and $\alpha(g^{-1})x \in E$. Then $\alpha|_{X_i}(g^{-1})x = \alpha(g^{-1})x \in E_i$. Also, $\varphi(g)s = (\varphi_1(g)s_1, \cdots, \varphi_n(g)s_n) \in S = S_1 \times \cdots \times S_n$ implies $\varphi_i(g)s_i \in S_i$, so,
\begin{equation*}
    \pi_{\varphi(g)s}(x) = \pi_{\varphi_i(g)s_i}^i(x) = \pi_{s_i}^i(\alpha|_{X_i}(g^{-1})x) = \pi_{s_i}^i(\alpha(g^{-1})x) = \pi_s(\alpha(g^{-1})x)
\end{equation*}
This concludes the proof.
\end{proof}

Combining Theorem \ref{thm-left-multi} and Proposition \ref{transitive-prop}, we see that all actions by sofic groups with locally finite stabilizers are sofic. We are unable to settle the more general case of amenable stabilizers. 

It is still open whether all actions by sofic groups are sofic. However, this does hold for amenable groups and free groups.

\begin{thm}\label{amenable examples all}
Any action $\alpha: G \curvearrowright X$ where $G$ is an amenable group is sofic.
\end{thm}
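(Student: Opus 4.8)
The plan is to build the finite approximations directly out of Følner sets; there is no need for a separate reduction to transitive actions, though one may alternatively first invoke Proposition \ref{transitive-prop} and treat only coset actions $G \curvearrowright G/H$, where the same construction works verbatim with $\alpha(s^{-1})x$ replaced by $s^{-1}x$. The one genuine idea is the following: if $A$ is a Følner set, viewed as a subset of $G$, and $\varphi(g)$ acts on $A$ by left translation wherever that stays inside $A$, then for $s\in A$ the map $\pi_s\colon x\mapsto \alpha(s^{-1})x$ automatically intertwines translation by $g$ with $\alpha(g^{-1})$, because $\alpha((gs)^{-1}) = \alpha(s^{-1})\alpha(g^{-1})$ whenever $gs$ is again in $A$. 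Everything else is the standard bookkeeping that shows amenable groups are sofic, now dragging the maps $\pi_s$ along.

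In detail: fix finite subsets $F\subseteq G$, $E\subseteq X$ and $\epsilon>0$; enlarging $F$ we may assume it is symmetric and contains $1_G$. By amenability choose a finite nonempty $K\subseteq G$ with $|gK\triangle K|<\delta|K|$ for all $g\in F\cup F\cdot F$, where $\delta>0$ is a constant depending only on $|F|$ and $\epsilon$, to be pinned down at the end. Put $A=K$, and for $g\in G$ define $\varphi(g)\in\Sym(K)$ by $\varphi(g)s=gs$ whenever $gs\in K$, extending arbitrarily to a bijection from $K\setminus g^{-1}K$ onto $K\setminus gK$ (these have equal size). Then $\varphi(1_G)=\id$, so $\varphi$ is unital, and the usual Følner estimate gives $(F,\epsilon)$-multiplicativity: for $g,h\in F$ and any $s\in K$ with $hs\in K$ and $ghs\in K$ one has $\varphi(gh)s=\varphi(g)\varphi(h)s=ghs$, and the set of $s\in K$ failing one of these two conditions has size at most $2\delta|K|$, since it is contained in $(K\setminus h^{-1}K)\cup(K\setminus(gh)^{-1}K)$ and $h,h^{-1}g^{-1}\in F\cup F\cdot F$.

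For the orbit approximation, set $B=\{\alpha(s^{-1})x : s\in K,\ x\in E\}$, which is finite, and $S=\{s\in K : gs\in K\text{ for all }g\in F\}$. Since $S^{c}\cap K=\bigcup_{g\in F}(K\setminus g^{-1}K)$ and $F$ is symmetric, $|S|>(1-|F|\delta)|K|$. For $s\in S$ define $\pi_s\colon E\hookrightarrow B$ by $\pi_s(x)=\alpha(s^{-1})x$, which is injective because $\alpha(s^{-1})$ is a bijection of $X$. Finally, for $s\in S$, $g\in F$, $x\in E$ with $\varphi(g)s\in S$ and $\alpha(g^{-1})x\in E$, we have $\varphi(g)s=gs$ because $s\in S$, hence
\[
\pi_{\varphi(g)s}(x)=\alpha((gs)^{-1})x=\alpha(s^{-1})\alpha(g^{-1})x=\pi_s(\alpha(g^{-1})x),
\]
which is exactly the required identity. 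Choosing $\delta$ with $|F|\delta<\epsilon$ and $2\delta<\epsilon$ makes $\varphi$ a unital, $(F,\epsilon)$-multiplicative, $(F,E,\epsilon)$-orbit approximation of $\alpha$, completing the proof.

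I do not expect a real obstacle here; the argument is elementary once one sees the formula $\pi_s(x)=\alpha(s^{-1})x$. The only point deserving a little care is packaging all the "bad" parts of $K$ — the $s$ for which translation by some $g\in F$, or by some product $gh$ with $g,h\in F$, leaves $K$ — into one subset of controlled density, which is why $K$ is taken Følner with respect to $F\cup F\cdot F$ rather than just $F$.
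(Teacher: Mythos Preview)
Your proof is correct and essentially identical to the paper's own argument: both use a F{\o}lner set $A\subseteq G$ with respect to $F\cdot F$ (which equals your $F\cup F\cdot F$ since $1_G\in F$), define $\varphi(g)$ as left translation on $A$ wherever that remains in $A$, take $S=\bigcap_{g\in F}(A\cap g^{-1}A)$, set $B=\alpha(A^{-1})E$, and use $\pi_s(x)=\alpha(s^{-1})x$. The choice $\delta=\epsilon/(2|F|)$ matches the paper's $\epsilon'$ exactly.
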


\begin{proof} Fix finite subsets $F \subseteq G$, $E \subseteq X$, and $\epsilon > 0$. Assume WLOG that $F$ is symmetric and contains the identity. Let $F' = F \cdot F$, $\epsilon' = \frac{\epsilon}{2|F|}$. As $G$ is amenable, we may then choose a Følner set $A \subseteq G$ with $|A \vartriangle gA| < \epsilon'|A|$ for all $g \in F'$. For $g \in G$, we choose $\varphi(g) \in \Sym(A)$ to be any element of $\Sym(A)$ s.t. $\varphi(g)a = ga$ whenever $ga \in A$. It is clear that $\varphi: G \rightarrow \Sym(A)$ is unital. We claim it is $(F, \epsilon)$-multiplicative. Indeed, let $g, h \in F$. For all $a \in A$ with $ha, gha \in A$, by definition we have $\varphi(g)\varphi(h)a = \varphi(gh)a$. So, as $h^{-1}, h^{-1}g^{-1} \in F \cdot F$,
\begin{equation*}
\begin{split}
    |A| \cdot d(\varphi(g)\varphi(h), \varphi(gh)) &\leq |\{a \in A: ha \notin A\}| + |\{a \in A: gha \notin A\}|\\
    &\leq |A \vartriangle h^{-1}A| + |A \vartriangle h^{-1}g^{-1}A|\\
    &< 2\epsilon'|A|\\
    &\leq \epsilon|A|
\end{split}
\end{equation*}

That is, $d(\varphi(g)\varphi(h), \varphi(gh)) < \epsilon$.

We now show $\varphi$ is an $(F, E, \epsilon)$-orbit approximation of $\alpha$ to conclude the proof. To this end, let $S = \{s \in A: gs \in A, \forall g \in F\} = \cap_{g \in F} (A \cap gA)$, as $F$ is symmetric. Since $|A \cap gA| > (1 - \epsilon')|A|$, we have $|S| > (1 - |F|\epsilon')|A| > (1 - \epsilon)|A|$. Now, let $B = \alpha(A^{-1}) \cdot E$. Define $\pi_s: E \hookrightarrow B$ by $\pi_s(x) = \alpha(s^{-1})x$. It is clear these maps are injective. We also have, for all $s \in S$, $g \in F$, $x \in E$ s.t. $\varphi(g)s \in S$ and $\alpha(g^{-1})x \in E$,
\begin{equation*}
    \pi_{\varphi(g)s}(x) = \pi_{gs}(x) = \alpha(s^{-1}g^{-1})x = \alpha(s^{-1})\alpha(g^{-1})x = \pi_s(\alpha(g^{-1})x)
\end{equation*}
where we have used the fact that, as $gs \in A$, by the definition of $\varphi(g)$, $\varphi(g)s = gs$. This concludes the proof.
\end{proof}

\begin{rem}
Together with item 1 of Proposition \ref{basic-prop}, this implies the action of any group on a finite set is sofic.
\end{rem}

\begin{thm}\label{free examples all}
Any action $\alpha: G \curvearrowright X$ where $G$ is a free group is sofic.
\end{thm}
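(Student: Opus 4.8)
The plan is to leverage two special features of free groups. First, a sofic-type approximation of an action of a free group $G$ can be taken to be an honest homomorphism $\varphi\colon G\to\Sym(A)$ (so $(F,\epsilon)$-multiplicativity is automatic and the ``quotient by $\varphi(H')$'' used below is a genuine equivalence relation). Second, finitely generated subgroups of free groups are separable in the profinite topology (Marshall Hall's theorem / LERF). Before using these I would make two reductions: writing the countable free group $G$ as an increasing union of finitely generated free subgroups (Nielsen--Schreier) and applying Proposition \ref{basic-prop}(4), we may assume $G$ is finitely generated free; and by Proposition \ref{transitive-prop} we may assume $\alpha$ is the left-translation action $G\curvearrowright G/H$ for some subgroup $H\le G$ (which need not be finitely generated).

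Now fix finite $F\subseteq G$, finite $E\subseteq G/H$, $\epsilon>0$, and a section $\sigma\colon G/H\to G$. From this I would extract the finite set $U:=\{\sigma(x)^{-1}g\,\sigma(g^{-1}x): x\in E,\ g\in F,\ g^{-1}x\in E\}$ (where $g^{-1}x:=\alpha(g^{-1})x$), which a one-line computation places inside $H$; put $H':=\langle U\rangle$, a finitely generated subgroup of $H$. Also set $V:=\{\sigma(x)^{-1}\sigma(y):x\neq y\in E\}$; each element of $V$ lies outside $H$ (if $\sigma(x)^{-1}\sigma(y)\in H$ then $x=y$), hence outside $H'$. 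Since $H'$ is a finitely generated subgroup of the free group $G$, Marshall Hall's theorem yields a finite-index normal subgroup $K\trianglelefteq G$ with $V\cap H'K=\varnothing$. Let $Q:=G/K$, let $q\colon G\to Q$ be the quotient, put $A:=Q$, and let $\varphi\colon G\to\Sym(A)$ be $q$ followed by the left-regular representation of $Q$; this is a homomorphism, hence unital and $(F,\epsilon)$-multiplicative, and $\Stab_{\varphi}(a)=K$ for every $a\in A$.

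For the orbit approximation, since $\varphi$ is a genuine homomorphism $\varphi(H')\le\Sym(A)$ is a subgroup; let $B:=A/\varphi(H')$ be the finite orbit space and, for each $a\in A$, define $\pi_a\colon E\to B$ by $\pi_a(x):=[\varphi(\sigma(x)^{-1})a]$. Taking $S:=A$, one checks that each $\pi_a$ is injective --- which via $\Stab_{\varphi}(a)=K$ reduces exactly to $q(v)\notin q(H')$ for $v\in V$, the defining property of $K$ --- and that the consistency identity $\pi_{\varphi(g)a}(x)=\pi_a(\alpha(g^{-1})x)$ holds, which using only multiplicativity of $\varphi$ reduces to observing that $\sigma(x)^{-1}g$ and $\sigma(g^{-1}x)^{-1}$ differ by the element $\sigma(x)^{-1}g\,\sigma(g^{-1}x)\in U\subseteq H'$ and hence represent the same class in $B$. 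This exhibits the required $(F,E,\epsilon)$-orbit approximation, in fact with $\epsilon$ to spare since $S=A$.

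The step I expect to be the real crux is conceptual rather than computational: recognizing that $A$ should be quotiented by $\langle U\rangle$ --- the subgroup generated by the finitely many ``cocycle'' elements that $F$ and $E$ force --- rather than by the ambient stabilizer $H$, which may be neither finitely generated nor separable, and that the finite generation of $\langle U\rangle$ is precisely what makes Hall's theorem applicable here. Once this is isolated, the remaining bookkeeping runs closely parallel to the proof of Theorem \ref{thm-left-multi}.
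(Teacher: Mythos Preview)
Your argument is correct, but it takes a genuinely different route from the paper's. The paper's proof is entirely elementary: it fixes $F$ and $E$, enlarges $E$ to a finite set $B\subseteq X$ closed under all suffixes of the words in $F^{-1}$ applied to $E$, uses freeness of $G$ only to extend the partial bijections $b\mapsto\alpha(g_i)b$ (one per free generator) to a genuine homomorphism $\psi\colon G\to\Sym(B)$, and then lets $A=\Sym(B)$ with $\varphi$ the composition of $\psi$ with the left regular representation and $\pi_s(x)=s^{-1}x$. No separability, no coset spaces, no reduction to transitive actions. Your approach instead mirrors the proof of Theorem~\ref{thm-left-multi}, replacing ``$N$ locally finite'' by ``$G$ LERF'': the finitely generated cocycle group $H'=\langle U\rangle$ plays the role of the finite $N'$, and Marshall Hall's theorem supplies the finite quotient on which $\varphi$ becomes an honest homomorphism while still separating the finitely many elements of $V$ from $H'$. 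The trade-off is clear: the paper's proof is self-contained and uses nothing about free groups beyond their defining universal property, whereas your argument invokes a nontrivial theorem but in return applies verbatim to any LERF group (surface groups, limit groups, virtually special groups, etc.), yielding a strictly stronger statement than the paper proves.
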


\begin{proof} Fix finite subsets $F \subseteq G$, $E \subseteq X$, and $\epsilon > 0$. Let the free generators of $G$ be $\{g_1, g_2, \cdots\}$ (either a finite set or a sequence, depending on whether $G$ is finitely generated or not). Let $F^{-1} = \{f_1, \cdots, f_n\}$. We write $f_k = g_{i_{k, 1}}^{\epsilon_{k, 1}} \cdots g_{i_{k, m_k}}^{\epsilon_{k, m_k}}$, where $\epsilon$’s are in $\{\pm 1\}$. Let $B \subseteq X$ be a finite set containing $E$ and all elements of $X$ of the form $\alpha(g_{i_{k, l}}^{\epsilon_{k, l}} \cdots g_{i_{k, m_k}}^{\epsilon_{k, m_k}})x$ for all $x \in E$, $1 \leq k \leq n$, $1 \leq l \leq m_k$. We define a homomorphism $\psi: G \rightarrow \Sym(B)$ by defining, for a free generator $g_i$, $\psi(g_i)$ to be any element of $\Sym(B)$ s.t. $\psi(g_i)b = \alpha(g_i)b$ whenever $\alpha(g_i)b \in B$, then extending it to a homomorphism from $G$. Let $A = \Sym(B)$ regarded as a finite set. Then, the left multiplication action of $\Sym(B)$ on itself gives rise to an inclusion $\Sym(B) \hookrightarrow \Sym(A)$. Let $\varphi: G \rightarrow \Sym(A)$ be $\psi$ composed with this inclusion.

Since $\varphi$ is a homomorphism, it is unital and $(F, \epsilon)$-multiplicative. So it suffices to show it is an $(F, E, \epsilon)$-orbit approximation of $\alpha$. We let $S = A$ and for each $s \in S = \Sym(B)$, we define $\pi_s(x) = s^{-1}x$. It is clear this is an injective map from $E$ to $B$. Now, for $s \in S$, $g \in F$, $x \in E$, we have $\pi_{\varphi(g)s}(x) = \pi_{\psi(g)s}(x) = s^{-1}\psi(g^{-1})x$. Here, we note that by the definition of $\psi$ and $B$, it is easy to see that $\psi(g^{-1})x = \alpha(g^{-1})x$. Thus, whenever $\alpha(g^{-1})x \in E$, we have $\pi_{\varphi(g)s}(x) = s^{-1}\psi(g^{-1})x = s^{-1}\alpha(g^{-1})x = \pi_s(\alpha(g^{-1})x)$. This concludes the proof.
\end{proof}

\section{Generalized wreath products}

\begin{defn}
    Let $G,H$ be groups and $\alpha:H\curvearrowright X$ an action on a set. The \emph{generalized wreath product} $G\wr_\alpha H$ is the semidirect product $G^{\oplus X} \rtimes_\beta H$ where $\beta(h)((g_x)_{x \in X}) = (g_{\alpha(h)^{-1}(x)})_{x\in X}$. We observe that the same construction can also applied to a tracial von Neumann algebra $M$ in place of $G$ and with direct sums replaced by tensor products.
\end{defn}

\begin{defn}
    Let $G, H$ be groups, $\alpha: H \curvearrowright X$ be an action on a set, and $A \leq G$ be a subgroup. Then $H$ acts on the amalgamated free product $\ast^{x \in X}_A G_x$ where $G_x$ are copies of $G$ by permuting $G_x$ according to $\alpha$. Denote this action by $\beta$. Then the \textit{amalgamated free generalized wreath product}, denoted by $G \wr^{\ast_A}_\alpha H$ is given by $G \wr^{\ast_A}_\alpha H = (\ast^{x \in X}_A G_x) \rtimes_\beta H$. In case $A = \{1_G\}$, we shall call this group the \emph{free generalized wreath product} and denote it by $G \wr^\ast_\alpha H$. Note that the same construction can also be applied to an inclusion of tracial von Neumann algebras $N \subseteq M$ in place of the inclusion of groups $A \leq G$.
\end{defn}

The main applications in our present work in defining sofic actions are Theorem \ref{gen-wr-prod} and Theorem \ref{gen-free-wr-prod}, which are heavily inspired by the work of \cite{HayesSale}. First though, we need a few definitions and lemmas.

\begin{defn}\label{defn fgwp}
Let $G$ be a countable discrete group, $A$ be a finite set. Let $\alpha_A: \Sym(A) \curvearrowright A$ be the natural action. We may then consider the generalized wreath product $G \wr_{\alpha_A} \Sym(A)$, which is of the form $G^{\oplus A} \rtimes_{\beta_A} \Sym(A)$ where $\beta_A$ is the action $\beta_A: \Sym(A) \curvearrowright G^{\oplus A}$ induced by $\alpha_A$. We define a metric $d_{G, A}$ on this group given by
\begin{equation*}
    d_{G, A}(g_1\sigma_1, g_2\sigma_2) = \frac{1}{|A|} |\{a \in A: \sigma_1(a) \neq \sigma_2(a)\textrm{ or }g_1(\sigma_1(a)) \neq g_2(\sigma_2(a))\}|
\end{equation*}
where $g_i \in G^{\oplus A}$ and $\sigma_i \in \Sym(A)$, with the former regarded as functions from $A$ to $G$. It is easy to verify that this is indeed a metric and is invariant under both left and right multiplication.
\end{defn}

\begin{lem}
Let $G$ be a sofic group, $A$ be a finite set. Then there exists a free ultrafilter $\mathcal{U}$ on $\mathbb{N}$ and a sequence of finite sets $F_i$, s.t. $(G^{\oplus A} \rtimes_{\beta_A} \Sym(A), d_{G, A})$ embeds into $\prod_\mathcal{U} (\Sym(F_i), d)$ isometrically.
\end{lem}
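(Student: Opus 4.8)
The plan is to show that the metric group $(G^{\oplus A} \rtimes_{\beta_A} \Sym(A), d_{G,A})$ is a ``sofic group'' in the appropriate metric sense — i.e. it embeds isometrically into a metric ultraproduct of symmetric groups with the normalized Hamming metric — by building explicit approximating permutations out of sofic approximations of $G$. Since $A$ is a \emph{fixed finite} set and $\Sym(A)$ is a \emph{finite} group, the only source of infinity is $G$, so the construction is essentially: take a good sofic approximation $\varphi\colon G \to \Sym(B_i)$ of $G$, form the finite set $F_i = B_i^A \times A$ (functions from $A$ to $B_i$, together with a marked coordinate in $A$), and let $G^{\oplus A} \rtimes \Sym(A)$ act on $F_i$ by letting $\Sym(A)$ permute the $A$-coordinate and simultaneously reindex the $B_i^A$-coordinate, while an element $(g_a)_{a\in A} \in G^{\oplus A}$ acts on the $B_i^A$-coordinate coordinatewise via $\varphi(g_a)$ in the slot $a$. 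More precisely one should think of $F_i$ as $A$ disjoint copies of $B_i^A$, with $\Sym(A)$ acting both by permuting which copy you are in and by the induced $\beta_A$-action on the exponent, and $G^{\oplus A}$ acting on each copy via the product permutation $\prod_{a\in A}\varphi(g_a)$.

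I would carry this out in the following steps. \textbf{Step 1:} Fix a finite subset $F \subseteq G^{\oplus A} \rtimes \Sym(A)$ and $\epsilon > 0$; by projecting to the (finitely many) $G$-coordinates appearing in elements of $F$, extract a finite subset $F_G \subseteq G$ and apply Lemma \ref{lem-reln-bw-defns} to obtain, for a suitable $\epsilon'$ depending only on $\epsilon$ and $|A|$, a unital, $(F_G, \epsilon')$-multiplicative $\varphi\colon G \to \Sym(B)$ with a large ``good set'' $S_\varphi \subseteq B$ on which $\varphi$ is genuinely multiplicative and separates the relevant group elements. \textbf{Step 2:} Define the permutation representation $\Phi\colon G^{\oplus A}\rtimes\Sym(A) \to \Sym(F)$, $F = A \times B^A$, by the formula sketched above, and check it is unital. \textbf{Step 3:} Verify $(F,\epsilon)$-multiplicativity: the ``good set'' in $F$ is $A \times S_\varphi^A$, which has relative size $> (1 - |A|\epsilon')$ inside $F$; on this set $\Phi$ is exactly multiplicative because $\Sym(A)$ acts by honest permutations and $\varphi$ is honestly multiplicative on $S_\varphi$, so choosing $\epsilon' = \epsilon/|A|$ suffices. \textbf{Step 4:} Verify that $\Phi$ is \emph{isometric for $d_{G,A}$}, i.e. $d(\Phi(g_1\sigma_1), \Phi(g_2\sigma_2)) \to d_{G,A}(g_1\sigma_1, g_2\sigma_2)$ along the ultrafilter: the key point is that for a given marked coordinate $a \in A$ and a given $\underline{b} = (b_c)_{c\in A} \in B^A$ landing in $S_\varphi^A$, the point $(a,\underline b)$ is moved to a \emph{different} place by $\Phi(g_1\sigma_1)$ and $\Phi(g_2\sigma_2)$ precisely when either $\sigma_1(a) \neq \sigma_2(a)$, or (in the $a$-slot after permutation) $\varphi(g_1(\sigma_1(a)))b_{?} \neq \varphi(g_2(\sigma_2(a)))b_{?}$, and on $S_\varphi$ the latter happens for a $1 - o(1)$ fraction of $b_?$ exactly when $g_1(\sigma_1(a)) \neq g_2(\sigma_2(a))$ — this is where the ``separating'' property $S_1$ from Lemma \ref{lem-reln-bw-defns} is used. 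Averaging over $a$ and $\underline b$ then reproduces the defining formula for $d_{G,A}$ up to an error that vanishes along $\mathcal{U}$. \textbf{Step 5:} Assemble: take a sequence $F_G^{(1)} \subseteq F_G^{(2)} \subseteq \cdots$ exhausting $G$ and $\epsilon_i \to 0$, run the above to get $\varphi_i\colon G \to \Sym(B_i)$ and $\Phi_i\colon G^{\oplus A}\rtimes\Sym(A)\to \Sym(F_i)$ with $F_i = A\times B_i^A$, pass to any free ultrafilter $\mathcal{U}$ containing $\{|F_i|\}$, and observe that $g\sigma \mapsto (\Phi_i(g\sigma))_\mathcal{U}$ is a well-defined injective group homomorphism into $\prod_\mathcal{U}(\Sym(F_i),d)$ which by Step 4 is isometric onto its image.

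The main obstacle — really the only nontrivial point — is \textbf{Step 4}, the verification of isometry, and within it the lower bound $d(\Phi(g_1\sigma_1),\Phi(g_2\sigma_2)) \gtrsim d_{G,A}(g_1\sigma_1,g_2\sigma_2)$. The subtlety is that two elements $g_1\sigma_1 \neq g_2\sigma_2$ with $\sigma_1 = \sigma_2$ but $g_1 \neq g_2$ in exactly one coordinate $a_0$ contribute only a $\tfrac{1}{|A|}$ term to $d_{G,A}$, and $\Phi(g_1\sigma_1)$, $\Phi(g_2\sigma_2)$ will disagree on a point $(a,\underline b)$ only when the marked coordinate, after applying $\sigma_1 = \sigma_2$, lands in the slot $a_0$ where the group elements differ — so one must track that for that fraction of marked coordinates, the product permutations $\prod_c \varphi(g_1(c))$ and $\prod_c \varphi(g_2(c))$ actually move a near-full-measure set of $\underline b \in S_\varphi^A$, which is exactly guaranteed by applying the separation property of $S_1$ coordinatewise. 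Getting the bookkeeping of these fractions right (and confirming that the contributions from distinct $\sigma$-values and from distinct $g$-values in a common $\sigma$-slot add up correctly, without double counting, to the Hamming distance in $\Sym(F_i)$) is the one place requiring genuine care; everything else is a routine product-of-sofic-approximations argument in the spirit of \cite{HayesSale}.
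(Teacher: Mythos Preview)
Your construction does not give an isometric embedding; the map $\Phi$ you describe fails Step 4 in exactly the case you single out as delicate. Take $\sigma_1 = \sigma_2 = \mathrm{id}$ and $g_1, g_2 \in G^{\oplus A}$ differing only in the coordinate $a_0$. With the action you specify --- $(g_c)_c$ acting on $(a, \underline{b}) \in A \times B^A$ via the product permutation $\prod_c \varphi(g_c)$ on the $B^A$ factor --- one has $\Phi(g_1)(a, \underline{b}) = \Phi(g_2)(a, \underline{b})$ if and only if $\varphi(g_1(a_0))b_{a_0} = \varphi(g_2(a_0))b_{a_0}$, a condition that does not involve the marked coordinate $a$ at all. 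Since $g_1(a_0) \neq g_2(a_0)$, soficity of $G$ forces this to fail for a $(1-o(1))$-fraction of $b_{a_0}$, hence for a $(1-o(1))$-fraction of all $(a, \underline{b})$; thus $d(\Phi(g_1), \Phi(g_2)) \to 1$, whereas $d_{G,A}(g_1, g_2) = 1/|A|$. Your claim in the last paragraph that disagreement occurs ``only when the marked coordinate \ldots\ lands in the slot $a_0$'' is false for the action you have written down: the product permutation alters the $a_0$-slot of $\underline{b}$ regardless of what the marked coordinate is.

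The remedy is not to use $B^A$ at all, and this is exactly what the paper does: take $F_i = E_i \times A$ (with $E_i$ the sets in a sofic approximation $\phi_i$ of $G$) and let $g\sigma$ act by $(x, a) \mapsto (\phi_i(g[\sigma(a)])x, \sigma(a))$. The crucial feature is that the element of $G$ applied to $x$ \emph{depends on the marked coordinate} $a$ (through $\sigma(a)$), so in the situation above the images disagree only on the slice $\{a = a_0\}$, contributing precisely $1/|A|$. The Hamming distance then decomposes as
\[
d(\pi_i(g_1\sigma_1), \pi_i(g_2\sigma_2)) = d(\sigma_1, \sigma_2) + \frac{1}{|A|} \sum_{a:\,\sigma_1(a)=\sigma_2(a)} d\bigl(\phi_i(g_1[\sigma_1(a)]),\, \phi_i(g_2[\sigma_2(a)])\bigr),
\]
which converges termwise to $d_{G,A}(g_1\sigma_1, g_2\sigma_2)$ with no further bookkeeping. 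The extra $B^A$ factor you introduce, and the careful accounting you anticipate, are both unnecessary --- and with the product-permutation action they actively destroy the isometry.
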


\begin{proof} As $G$ is sofic, there exists a free ultrafilter $\mathcal{U}$ on $\mathbb{N}$, a sequence of finite sets $E_i$, and a group homomorphism $\phi: G \rightarrow \prod_\mathcal{U} (\Sym(E_i), d)$ s.t. $d_\mathcal{U}(\phi(g), \phi(h)) = 1$ whenever $g \neq h$. We lift it to a sequence of maps $\phi_i: G \rightarrow \Sym(E_i)$. Now, let $F_i = E_i \times A$. We define $\pi_i: G^{\oplus A} \rtimes_{\beta_A} \Sym(A) \rightarrow \Sym(F_i)$ by,
\begin{equation*}
    \pi_i(g\sigma)(x, a) = (\phi_i(g[\sigma(a)])x, \sigma(a))
\end{equation*}
where $g \in G^{\oplus A}$, $\sigma \in \Sym(A)$, $x \in E_i$, $a \in A$. Let $\pi: G^{\oplus A} \rtimes_{\beta_A} \Sym(A) \rightarrow \prod_\mathcal{U} (\Sym(F_i), d)$ be given by $\pi(g\sigma) = (\pi_i(g\sigma))_\mathcal{U}$. We first verify that $\pi$ is a group homomorphism. Indeed, let $g_1, g_2 \in G^{\oplus A}$, $\sigma_1, \sigma_2 \in \Sym(A)$, then,
\begin{equation*}
\begin{split}
    \pi_i(g_1\sigma_1g_2\sigma_2)(x, a) &= \pi_i(g_1\beta_{\sigma_1}(g_2)\sigma_1\sigma_2)(x, a)\\
    &= (\phi_i(g_1[\sigma_1\sigma_2(a)]\beta_{\sigma_1}(g_2)[\sigma_1\sigma_2(a)])x, \sigma_1\sigma_2(a))\\
    &= (\phi_i(g_1[\sigma_1\sigma_2(a)]g_2[\sigma_2(a)])x, \sigma_1\sigma_2(a))
\end{split}
\end{equation*}
while,
\begin{equation*}
\begin{split}
    \pi_i(g_1\sigma_1)\pi_i(g_2\sigma_2)(x, a) &= \pi_i(g_1\sigma_1)(\phi_i(g_2[\sigma_2(a)])x, \sigma_2(a))\\
    &= (\phi_i(g_1[\sigma_1\sigma_2(a)])\phi_i(g_2[\sigma_2(a)])x, \sigma_1\sigma_2(a))
\end{split}
\end{equation*}
so we have that,
\begin{equation*}
    d(\pi_i(g_1\sigma_1g_2\sigma_2), \pi_i(g_1\sigma_1)\pi_i(g_2\sigma_2)) = \frac{1}{|A|} \sum_{a \in A} d(\phi_i(g_1[\sigma_1\sigma_2(a)]g_2[\sigma_2(a)]), \phi_i(g_1[\sigma_1\sigma_2(a)])\phi_i(g_2[\sigma_2(a)]))
\end{equation*}

For any fixed $a \in A$, $g_1[\sigma_1\sigma_2(a)]$ is a fixed element of $G$, and so is $g_2[\sigma_2(a)]$, whence,
\begin{equation*}
    d(\phi_i(g_1[\sigma_1\sigma_2(a)]g_2[\sigma_2(a)]), \phi_i(g_1[\sigma_1\sigma_2(a)])\phi_i(g_2[\sigma_2(a)])) \rightarrow 0
\end{equation*}
as $i \rightarrow \mathcal{U}$. Hence, $d(\pi_i(g_1\sigma_1g_2\sigma_2), \pi_i(g_1\sigma_1)\pi_i(g_2\sigma_2)) \rightarrow 0$ as $i \rightarrow \mathcal{U}$, i.e., $\pi$ is a group homomorphism.

We now show $\pi$ is isometric to conclude the proof. By definition of $\pi_i$, we see that,
\begin{equation*}
    d(\pi_i(g_1\sigma_1), \pi_i(g_2\sigma_2)) = d(\sigma_1, \sigma_2) + \frac{1}{|A|} \sum_{\begin{smallmatrix}
    a \in A\\
    \sigma_1(a) = \sigma_2(a)
    \end{smallmatrix}} d(\phi_i(g_1[\sigma_1(a)]), \phi_i(g_2[\sigma_2(a)]))
\end{equation*}

For any fixed $a \in A$ with $\sigma_1(a) = \sigma_2(a)$, $g_1[\sigma_1(a)]$ is a fixed element of $G$ and so is $g_2[\sigma_2(a)]$, hence $d(\phi_i(g_1[\sigma_1(a)]), \phi_i(g_2[\sigma_2(a)])) = 0$ if $g_1[\sigma_1(a)] = g_2[\sigma_2(a)]$ and $d(\phi_i(g_1[\sigma_1(a)]), \phi_i(g_2[\sigma_2(a)])) \rightarrow 1$ as $i \rightarrow \mathcal{U}$ otherwise. Hence, as $i \rightarrow \mathcal{U}$, we have,
\begin{equation*}
\begin{split}
    d(\pi_i(g_1\sigma_1), \pi_i(g_2\sigma_2)) &\rightarrow d(\sigma_1, \sigma_2) + \frac{1}{|A|} |\{a \in A: \sigma_1(a) = \sigma_2(a)\textrm{ and }g_1[\sigma_1(a)] \neq g_2[\sigma_2(a)]\}\\
    &= d_{G, A}(g_1\sigma_1, g_2\sigma_2)
\end{split}
\end{equation*}

This proves the claim.
\end{proof}

As an immediate corollary, we get,

\begin{cor}\label{intermediate-ultraprod-cor}
Let $G$ be a countable discrete group. Suppose there exists a free ultrafilter $\mathcal{U}$ on $\mathbb{N}$, a sequence of sofic groups $G_i$, and a sequence of finite sets $A_i$ s.t. $G$ embeds into $\prod_\mathcal{U} (G_i^{\oplus A_i} \rtimes_{\beta_{A_i}} \Sym(A_i), d_{G_i, A_i})$, then $G$ is sofic.
\end{cor}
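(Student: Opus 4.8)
The plan is to deduce soficity of $G$ from the observation that it sits inside an ultraproduct of the metric groups $M_i := (G_i^{\oplus A_i} \rtimes_{\beta_{A_i}} \Sym(A_i), d_{G_i,A_i})$, each of which --- since $G_i$ is sofic and $A_i$ is finite --- embeds isometrically into a metric ultraproduct of symmetric groups by the preceding Lemma. Write the hypothesis as an injective group homomorphism $\iota \colon G \hookrightarrow \prod_\mathcal{U}(M_i, d_{G_i,A_i})$, and for each $i$ fix, via the Lemma, a free ultrafilter $\mathcal{U}_i$ on $\mathbb{N}$, finite sets $(F_{i,j})_{j \in \mathbb{N}}$, and an isometric group embedding $\theta_i \colon M_i \hookrightarrow \prod_{\mathcal{U}_i}(\Sym(F_{i,j}), d)$. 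Because each $\theta_i$ is isometric, $(x_i)_\mathcal{U} \mapsto (\theta_i(x_i))_\mathcal{U}$ is a well-defined isometric group embedding of $\prod_\mathcal{U}(M_i, d_{G_i,A_i})$ into the iterated metric ultraproduct $\prod_\mathcal{U}\big(\prod_{\mathcal{U}_i}(\Sym(F_{i,j}), d)\big)$; composing with $\iota$ gives an injective homomorphism of $G$ into this iterated ultraproduct.

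The key step is to collapse the iterated metric ultraproduct to an ordinary one. Let $\mathcal{W}$ be the ultrafilter on $\mathbb{N} \times \mathbb{N}$ defined by $\Omega \in \mathcal{W} \iff \{i : \{j : (i,j) \in \Omega\} \in \mathcal{U}_i\} \in \mathcal{U}$; one checks directly that $\mathcal{W}$ is a free ultrafilter and that for every bounded $f \colon \mathbb{N} \times \mathbb{N} \to \mathbb{R}$ one has the iterated-limit identity $\lim_{\mathcal{W}} f = \lim_{i \to \mathcal{U}} \lim_{j \to \mathcal{U}_i} f(i,j)$. Applying this with $f(i,j) = d(g_{i,j}, g'_{i,j})$ shows that the natural map sending the $\mathcal{W}$-class of an array $(g_{i,j}) \in \prod_{(i,j)} \Sym(F_{i,j})$ to the element of $\prod_\mathcal{U}\prod_{\mathcal{U}_i}\Sym(F_{i,j})$ represented by the same array is a surjective isometry, hence an isometric isomorphism of metric groups. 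Re-indexing $\mathbb{N} \times \mathbb{N}$ by $\mathbb{N}$ (which carries $\mathcal{W}$ to a free ultrafilter on $\mathbb{N}$ and $(F_{i,j})$ to a single sequence $(F_k)$ of finite sets), we obtain an injective group homomorphism $\varphi \colon G \hookrightarrow \prod_{\mathcal{W}}(\Sym(F_k), d)$.

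It remains to upgrade this embedding to soficity, which is the standard amplification argument already invoked in the proof of Theorem \ref{group being sofic if action is sofic}. Given a finite symmetric $F \subseteq G$ with $1_G \in F$ and $\epsilon \in (0,1)$, put $c = \tfrac12 \min_{g \in F \setminus \{1_G\}} d_\mathcal{W}(\varphi(g), 1) > 0$ (positive since $\varphi$ is injective), choose $n$ with $(1-c)^n < \epsilon$ and then $\delta > 0$ with $1 - (1-\delta)^n < \epsilon$, and lift $\varphi$ to maps $\varphi_k \colon G \to \Sym(F_k)$ with $\varphi_k(1_G) = 1$. The set of $k$ for which $\varphi_k$ is $(F, \delta)$-multiplicative and satisfies $d(\varphi_k(g), 1) > c$ for all $g \in F \setminus \{1_G\}$ belongs to $\mathcal{W}$, hence is nonempty; picking such a $k$ and passing to the $n$-fold diagonal power $\varphi_k^{\otimes n} \colon G \to \Sym(F_k^{\,n})$, and using that the agreement set of $\varphi_k^{\otimes n}(gh)$ with $\varphi_k^{\otimes n}(g)\varphi_k^{\otimes n}(h)$ and the fixed-point set of $\varphi_k^{\otimes n}(g)$ are $n$-fold Cartesian products of the corresponding sets for $\varphi_k$, one checks that $\varphi_k^{\otimes n}$ is unital, $(F, \epsilon)$-multiplicative, and has $d(1, \varphi_k^{\otimes n}(g)) > 1 - \epsilon$ for all $g \in F \setminus \{1_G\}$ --- exactly the witness demanded by item (4) of Definition \ref{main defn}. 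Hence $G$ is sofic. The only step that is not pure bookkeeping is the iterated-ultraproduct collapse in the second paragraph; everything else is routine, and a reader who prefers to avoid it can instead extract, for each finite $F \subseteq G$ and $\epsilon > 0$, a single good index $i$ from $\prod_\mathcal{U} M_i$ followed by a single good index $j$ from $\theta_i$, at the cost of slightly more delicate $\epsilon$-management.
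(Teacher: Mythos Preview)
Your argument is correct and is exactly the fleshing-out the paper intends: the paper presents this corollary with no proof beyond the phrase ``As an immediate corollary, we get,'' relying on the preceding Lemma to isometrically embed each $G_i^{\oplus A_i}\rtimes_{\beta_{A_i}}\Sym(A_i)$ into a metric ultraproduct of symmetric groups, followed by a standard (iterated-ultraproduct or two-stage $\epsilon$-extraction) collapse and amplification---precisely the steps you carry out. Your final remark about extracting a single good $i$ and then a single good $j$ is indeed the more hands-on alternative the paper's brevity invites; both routes are standard and yield the same conclusion.
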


\begin{thm}\label{gen-wr-prod}
Let $G, H$ be sofic groups, $\alpha: H \curvearrowright X$ be a sofic action. Then the generalized wreath product $G \wr_{\alpha} H$ is sofic.
\end{thm}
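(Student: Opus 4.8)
The plan is to invoke Corollary \ref{intermediate-ultraprod-cor}. Write $G\wr_\alpha H = G^{\oplus X}\rtimes_\beta H$, with a generic element of the form $fh$ ($f\in G^{\oplus X}$, $h\in H$). I will produce a free ultrafilter $\mathcal{U}$, finite sets $A_i$, the sofic groups $G_i := G^{\oplus B_i}$ for suitable finite sets $B_i$ (these are finite direct powers of $G$, hence sofic since $G$ is), and an injective group homomorphism
\[
\Theta : G\wr_\alpha H \hookrightarrow \prod_\mathcal{U}\bigl(G_i^{\oplus A_i}\rtimes_{\beta_{A_i}}\Sym(A_i),\; d_{G_i,A_i}\bigr);
\]
Corollary \ref{intermediate-ultraprod-cor} then yields soficity of $G\wr_\alpha H$.

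To set up the data, I would fix exhausting sequences $F_1\subseteq F_2\subseteq\cdots$ of $H$ and $E_1\subseteq E_2\subseteq\cdots$ of $X$, and $\epsilon_i\downarrow 0$, and use soficity of $\alpha$ (Definition \ref{main defn}, or Proposition \ref{ultra-prod}) to obtain, for each $i$, a unital $(F_i,\epsilon_i)$-multiplicative map $\psi_i:H\to\Sym(A_i)$ which is an $(F_i,E_i,\epsilon_i)$-orbit approximation of $\alpha$, witnessed by a finite set $B_i$, a subset $S_i\subseteq A_i$ with $|S_i|>(1-\epsilon_i)|A_i|$, and injections $\pi^i_s:E_i\hookrightarrow B_i$ for $s\in S_i$ satisfying $\pi^i_{\psi_i(g)s}(x)=\pi^i_s(\alpha(g^{-1})x)$ whenever everything in sight is defined. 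In parallel, fix a sofic approximation $\rho_i:H\to\Sym(C_i)$ of the group $H$, and replace $(A_i,\psi_i)$ by $(A_i\times C_i,\;\psi_i\times\rho_i)$, extending the orbit-approximation data by $\pi^i_{(s,c)}:=\pi^i_s$; the equivariance identity survives this change, and after it the maps $\psi_i$ separate points of $H$ along any free ultrafilter. Finally choose $\mathcal{U}$ concentrated on $\{|A_i|\}$ so that $h\mapsto(\psi_i(h))_\mathcal{U}$ is a group homomorphism into $\prod_\mathcal{U}(\Sym(A_i),d)$.

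The heart of the argument is the formula for $\Theta$. Set $\Theta(h)=(1_{G_i^{\oplus A_i}}\cdot\psi_i(h))_\mathcal{U}$, and for $f\in G^{\oplus X}$ define $\hat f_i:A_i\to G_i=G^{\oplus B_i}$ by, for $a\in S_i$, $\hat f_i(a)=\bigl(b\mapsto f((\pi^i_a)^{-1}(b))\bigr)$ on the domain $\pi^i_a(E_i)\subseteq B_i$ and $1_G$ elsewhere, and $\hat f_i(a)=1_{G_i}$ for $a\notin S_i$; put $\Theta(f)=(\hat f_i\cdot 1)_\mathcal{U}$ and $\Theta(fh):=\Theta(f)\Theta(h)$. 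Reading $f$ ``through the frame $\pi^i_a$'' rather than transporting it naively is exactly what makes the construction work: (a) $f\mapsto\hat f_i$ is literally a homomorphism $G^{\oplus X}\to G_i^{\oplus A_i}$ for each $i$, so $\Theta(f_1 f_2)=\Theta(f_1)\Theta(f_2)$; (b) a short computation from the orbit-approximation identity (using $\psi_i(h)^{-1}\approx\psi_i(h^{-1})$ and $(F_i,\epsilon_i)$-multiplicativity) shows that the function $a\mapsto\hat f_i(\psi_i(h)^{-1}a)$ agrees with $\widehat{\beta(h)(f)}_i$ off a set of normalized cardinality $\to 0$ along $\mathcal{U}$, which is precisely the conjugation relation $\Theta(h)\Theta(f)\Theta(h)^{-1}=\Theta(\beta(h)(f))$ inside $G_i^{\oplus A_i}\rtimes_{\beta_{A_i}}\Sym(A_i)$, so $\Theta$ is a well-defined homomorphism; and (c) as soon as $\operatorname{supp}(f)\subseteq E_i$ and $f\neq 1$, every $a\in S_i$ has $\hat f_i(a)\neq 1_{G_i}$, so $d_{G_i,A_i}(\hat f_i\cdot1,\,1)\ge |S_i|/|A_i|\to 1$, giving injectivity of $\Theta|_{G^{\oplus X}}$; combined with injectivity of $\Theta|_H$ (visible on the $C_i$-factor, which $\hat f_i$ ignores), any $fh\in\ker\Theta$ forces $h=1$ and then $f=1$.

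I expect the main obstacle — and the point at which the idea of \cite{HayesSale} is being pushed — to be item (c): making the base group $G^{\oplus X}$ embed with its image ``spread out'' to positive density inside $G_i^{\oplus A_i}$, so that the embedding is injective. A naive transport of $f$ along a single frame produces a function whose support is a vanishing fraction of $A_i$, hence represents the identity; the remedy above is to let the coefficient groups $G_i=G^{\oplus B_i}$ themselves grow, so that a \emph{single} coordinate $a\in A_i$ already records the entire finite pattern of $f$ as seen through $\pi^i_a$. The remaining verifications — the equivariance computation in (b), the fact that the orbit-approximation structure is preserved under the $\times\,C_i$ modification, and that $\Theta$ respects all relations of the semidirect product along $\mathcal{U}$ — are routine from the definitions. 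The amalgamated free version (Theorem \ref{gen-free-wr-prod}) should follow the same scheme, with $G^{\oplus B_i}$ replaced by the $B_i$-fold amalgamated free product of copies of $G$ over $A$.
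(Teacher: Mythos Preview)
Your proposal is correct and follows essentially the same construction as the paper: the map $\hat f_i(a)$ is exactly the paper's $P^i_a(f)=q^i_a\circ p_i(f)$, and your $\Theta$ coincides with the paper's $\rho$, so the equivariance computation (b) and the ``spread-out'' injectivity argument (c) on $G^{\oplus X}$ are the same. The only point of departure is how you secure injectivity on $H$: you tensor in a genuine sofic approximation $\rho_i:H\to\Sym(C_i)$ up front and enlarge $A_i$ to $A_i\times C_i$, whereas the paper leaves $\rho$ with a possibly nontrivial kernel $N\subseteq H$ and instead passes to the injective map $gh\mapsto(ghN,h)$ into $(G\wr_\alpha H)/N\times H$, which is a product of sofic groups. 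Both devices are equivalent in effect; your variant makes $\Theta$ injective outright and avoids the auxiliary product, at the small cost of verifying that the orbit-approximation data survives the $\times\,C_i$ enlargement (which it does, as you note).
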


\begin{proof} Fix increasing sequences of finite subsets $F_1 \subseteq F_2 \subseteq \cdots \subseteq H$ and $E_1 \subseteq E_2 \subseteq \cdots \subseteq X$ s.t. $\cup_i F_i = H$, $\cup_i E_i = X$. Fix a decreasing sequence $\epsilon_i > 0$ s.t. $\lim_i \epsilon_i = 0$. For each $i$, let $\varphi_i: H \rightarrow \Sym(A_i)$ be a unital, $(F_i, \epsilon_i)$-multiplicative, and an $(F_i, E_i, \epsilon_i)$-orbit approximation of $\alpha$. By definition of $(F_i, E_i, \epsilon_i)$-orbit approximation, there exists a finite set $B_i$ and a subset $S_i \subseteq A_i$ s.t. $|S_i| > (1 - \epsilon_i)|A_i|$ and for each $s \in S_i$ there is an injective map $\pi^i_s: E_i \hookrightarrow B_i$ s.t. $\pi^i_{\varphi_i(g)s}(x) = \pi^i_s(\alpha(g^{-1})x)$ for all $s \in S_i$, $g \in F_i$, $x \in E_i$, whenever $\varphi_i(g)s \in S_i$ and $\alpha(g^{-1})x \in E_i$. Let $G_i = G^{\oplus B_i}$, which is sofic as $G$ is. Let $p_i: G^{\oplus X} \rightarrow G^{\oplus E_i}$ be the canonical projection map. Let $q^i_s: G^{\oplus E_i} \hookrightarrow G^{\oplus B_i} = G_i$ be the inclusion map induced by $\pi^i_s: E_i \hookrightarrow B_i$. Let $P^i_s = q^i_s \circ p_i$.

Let the action of $H$ on $G^{\oplus X}$ via $\alpha$ be denoted by $\beta$, i.e., $G \wr_{\alpha} H = G^{\oplus X} \rtimes_\beta H$. We define $\rho_i: G \wr_{\alpha} H \rightarrow G_i^{\oplus A_i} \rtimes_{\beta_{A_i}} \Sym(A_i)$ as follows,
\begin{equation*}
    \rho_i(gh) = (\oplus_{s \in S_i} P^i_s(g) \bigoplus \oplus_{a \in A_i \setminus S_i} 1_{G_i}) \cdot \varphi_i(h)
\end{equation*}
where $g \in G^{\oplus X}$ and $h \in H$. Let $\mathcal{U}$ be an arbitrary free ultrafilter on $\mathbb{N}$. Let $\rho: G \wr_{\alpha} H \rightarrow \prod_\mathcal{U} (G_i^{\oplus A_i} \rtimes_{\beta_{A_i}} \Sym(A_i), d_{G_i, A_i})$ be given by $\rho(gh) = (\rho_i(gh))_\mathcal{U}$.

We shall now prove that $\rho$ is a group homomorphism. Indeed, let $g_1, g_2 \in G^{\oplus X}$, $h_1, h_2 \in H$. The supports of $g_1$ and $g_2$ are finite, so for large enough $i$, $\textrm{supp}(g_2) \subseteq E_i$, $\alpha(h_1) \cdot \textrm{supp}(g_2) \subseteq E_i$, and $h_1, h_1^{-1}, h_2 \in F_i$. Furthermore, whenever $h_1, h_1^{-1} \in F_i$, as $\varphi_i$ is $(F_i, \epsilon_i)$-multiplicative, we have,
\begin{equation*}
\begin{split}
    d(\varphi_i(h_1^{-1})^{-1}, \varphi_i(h_1)) &= d(\varphi_i(h_1^{-1})\varphi_i(h_1^{-1})^{-1}, \varphi_i(h_1^{-1})\varphi_i(h_1))\\
    &= d(1, \varphi_i(h_1^{-1})\varphi_i(h_1))\\
    &= d(\varphi_i(h_1h_1^{-1}), \varphi_i(h_1^{-1})\varphi_i(h_1))\\
    &< \epsilon_i
\end{split}
\end{equation*}

Let $\tilde{S}_i = \{s \in S_i: \varphi_i(h_1^{-1})^{-1}s = \varphi_i(h_1)s\}$. Then, for large enough $i$, $|\tilde{S}_i| > (1 - 2\epsilon_i)|A_i|$. Now, for large enough $i$,
\begin{equation*}
\begin{split}
    \rho_i(g_1h_1g_2h_2) &= \rho_i(g_1\beta_{h_1}(g_2)h_1h_2)\\
    &= (\oplus_{s \in S_i} P^i_s(g_1\beta_{h_1}(g_2)) \bigoplus \oplus_{a \in A_i \setminus S_i} 1_{G_i}) \cdot \varphi_i(h_1h_2)\\
    &= (\oplus_{s \in S_i} P^i_s(g_1)P^i_s(\beta_{h_1}(g_2)) \bigoplus \oplus_{a \in A_i \setminus S_i} 1_{G_i}) \cdot \varphi_i(h_1h_2)
\end{split}
\end{equation*}

We observe that, when $s \in \tilde{S}_i \cap \varphi_i(h_1)\tilde{S}_i = \tilde{S}_i \cap \varphi_i(h_1^{-1})^{-1}\tilde{S}_i$, $P^i_s(\beta_{h_1}(g_2)) \in G_i = G^{\oplus B_i}$, regarded as a function from $B_i$ to $G$, is given by,
\begin{equation*}
    P^i_s(\beta_{h_1}(g_2))(b) = \begin{cases}
        g_2(\alpha(h_1^{-1})x), \textrm{if }x \in E_i\textrm{ and }\pi^i_s(x) = b\\
        1_G, \textrm{otherwise}
    \end{cases}
\end{equation*}

We note that $\alpha(h_1^{-1})x \in \textrm{supp}(g_2)$ iff $x \in \alpha(h_1) \cdot \textrm{supp}(g_2)$. Since for large enough $i$, $\alpha(h_1) \cdot \textrm{supp}(g_2) \subseteq E_i$, $\textrm{supp}(g_2) \subseteq E_i$, $h_1^{-1} \in F_i$, and assuming $s \in \tilde{S}_i \cap \varphi_i(h_1)\tilde{S}_i$, we have,
\begin{equation*}
\begin{split}
    P^i_s(\beta_{h_1}(g_2))(b) &= \begin{cases}
        g_2(\alpha(h_1^{-1})x), \textrm{if }x \in \alpha(h_1) \cdot \textrm{supp}(g_2)\textrm{ and }\pi^i_s(x) = b\\
        1_G, \textrm{otherwise}
    \end{cases}\\
    &= \begin{cases}
        g_2(x), \textrm{if }x \in \textrm{supp}(g_2)\textrm{ and }\pi^i_s(\alpha(h_1)x) = b\\
        1_G, \textrm{otherwise}
    \end{cases}\\
    &= \begin{cases}
        g_2(x), \textrm{if }x \in \textrm{supp}(g_2)\textrm{ and }\pi^i_{\varphi_i(h_1^{-1})s}(x) = b\\
        1_G, \textrm{otherwise}
    \end{cases}\\
    &= \begin{cases}
        g_2(x), \textrm{if }x \in E_i\textrm{ and }\pi^i_{\varphi_i(h_1^{-1})s}(x) = b\\
        1_G, \textrm{otherwise}
    \end{cases}\\
    &= P^i_{\varphi_i(h_1^{-1})s}(g_2)(b)
\end{split}
\end{equation*}

So,
\begin{equation*}
    \rho_i(g_1h_1g_2h_2) = (\oplus_{s \in \tilde{S}_i \cap \varphi_i(h_1)\tilde{S}_i} P^i_s(g_1)P^i_{\varphi_i(h_1^{-1})s}(g_2) \bigoplus \oplus_{a \in A_i \setminus (\tilde{S}_i \cap \varphi_i(h_1)\tilde{S}_i)} \kappa_a) \cdot \varphi_i(h_1h_2)
\end{equation*}
for some $\kappa_a \in G_i$. On the other hand,
\begin{equation*}
\begin{split}
    \rho_i(g_1h_1)\rho_i(g_2h_2) &= (\oplus_{s \in S_i} P^i_s(g_1) \bigoplus \oplus_{a \in A_i \setminus S_i} 1_{G_i}) \cdot \varphi_i(h_1) \cdot (\oplus_{s \in S_i} P^i_s(g_2) \bigoplus \oplus_{a \in A_i \setminus S_i} 1_{G_i}) \cdot \varphi_i(h_2)\\
    &= (\oplus_{s \in \tilde{S}_i \cap \varphi_i(h_1)\tilde{S}_i} P^i_s(g_1)P^i_{\varphi_i(h_1)^{-1}s}(g_2) \bigoplus \oplus_{a \in A_i \setminus (\tilde{S}_i \cap \varphi_i(h_1)\tilde{S}_i)} \lambda_a) \cdot \varphi_i(h_1)\varphi_i(h_2)
\end{split}
\end{equation*}
for some $\lambda_a \in G_i$. Thus, for large enough $i$,
\begin{equation*}
\begin{split}
    d_{G_i, A_i}(\rho_i(g_1h_1g_2h_2), \rho_i(g_1h_1)\rho_i(g_2h_2)) &\leq d(\varphi_i(h_1h_2), \varphi_i(h_1)\varphi_i(h_2)) + \frac{A_i \setminus (\tilde{S}_i \cap \varphi_i(h_1)\tilde{S}_i)}{|A_i|}\\
    &< \epsilon_i + 4\epsilon_i\\
    &= 5\epsilon_i\\
    &\rightarrow 0
\end{split}
\end{equation*}

This proves that $\rho$ is a group homomorphism. Let $N = \textrm{ker}(\rho)$. By Corollary \ref{intermediate-ultraprod-cor}, $(G \wr_{\alpha} H)/N$ is sofic. Let $\iota: G \wr_{\alpha} H \rightarrow (G \wr_{\alpha} H)/N \times H$ be defined by $\iota(gh) = (ghN, h)$ where $g \in G^{\oplus X}$ and $h \in H$. Since both $(G \wr_{\alpha} H)/N$ and $H$ are sofic, it now suffices to prove $\iota$ is injective.

Clearly, $\textrm{ker}(\iota) \subseteq G^{\oplus X}$. Assume to the contrary that $\textrm{ker}(\iota)$ is not trivial and let $g \in \textrm{ker}(\iota) \setminus \{1\}$. Then $\rho_i(g) = (\oplus_{s \in S_i} P^i_s(g) \bigoplus \oplus_{a \in A_i \setminus S_i} 1_{G_i})$. Since $g \neq 1$, there exists $x \in X$ s.t. $g(x) \neq 1_G$. For large enough $i$, $x \in E_i$, and in such cases $P^i_s(g) \neq 1_{G_i}$ for all $s \in S_i$. Then we have,
\begin{equation*}
    d_{G_i, A_i}(\rho_i(g), 1) = \frac{|S_i|}{|A_i|} = 1 - \epsilon_i \rightarrow 1
\end{equation*}

In particular, $\rho(g) \neq 1$, so $g \notin N$. But then $\iota(g) \neq 1$, a contradiction. This proves the claim.
\end{proof}

The following theorem can be proved following similar arguments as in the proof of Theorem \ref{gen-wr-prod}. We include parts of a proof here for the convenience of the readers,

\begin{thm}\label{gen-free-wr-prod}
Let $H$ be a sofic group, $\alpha: H \curvearrowright X$ be a sofic action, $A \leq G$ be an inclusion of countable discrete groups s.t. the amalgamated free product of any countably many copies of $G$ over $A$ is sofic. Then the amalgamated free generalized wreath product $G \wr^{\ast_A}_\alpha H$ is sofic. In particular, if $G, H$ are sofic groups and $\alpha: H \curvearrowright X$ is a sofic action, then the free generalized wreath product $G \wr^\ast_\alpha H$ is sofic, and under the same conditions, if we in addition have an amenable subgroup $A$ of $G$, then the amalgamated free generalized wreath product $G \wr^{\ast_A}_\alpha H$ is sofic.
\end{thm}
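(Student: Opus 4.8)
The plan is to rerun the proof of Theorem \ref{gen-wr-prod} essentially verbatim, the one structural change --- and the reason the argument goes through cleanly --- being that the ``inner'' sofic groups are now amalgamated free products of copies of $G$ over $A$ in place of direct sums of copies of $G$. Fix, as in that proof, exhausting sequences $F_1 \subseteq F_2 \subseteq \cdots$ of $H$ and $E_1 \subseteq E_2 \subseteq \cdots$ of $X$, a sequence $\epsilon_i \downarrow 0$, and unital $(F_i,\epsilon_i)$-multiplicative maps $\varphi_i : H \to \Sym(A_i)$ that are $(F_i, E_i, \epsilon_i)$-orbit approximations of $\alpha$, witnessed by finite sets $B_i$, subsets $S_i \subseteq A_i$ with $|S_i| > (1-\epsilon_i)|A_i|$, and injections $\pi^i_s : E_i \hookrightarrow B_i$ for $s \in S_i$. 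Set $G_i = \ast^{b \in B_i}_A G$, the amalgamated free product over $A$ of one copy of $G$ for each $b \in B_i$, which is sofic by hypothesis, and write $\mathcal{G} = \ast^{x \in X}_A G_x$ for the base group, so $G \wr^{\ast_A}_\alpha H = \mathcal{G} \rtimes_\beta H$ with $\beta$ permuting the factors of $\mathcal{G}$ according to $\alpha$.

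In place of the coordinate projection $p_i$ and coordinate inclusions $q^i_s$ of that proof, I would use the following. Let $p_i : \mathcal{G} \to \ast^{x \in E_i}_A G_x$ be the homomorphism that is the identity on $G_x$ for $x \in E_i$ and that, for $x \notin E_i$, sends $G_x$ isomorphically onto a fixed factor $G_{x_0}$ with $x_0 \in E_i$ by the canonical identification of copies of $G$; this is well defined by the universal property of the amalgamated free product, since all these maps agree on $A$, and it restricts to the identity on $\ast^{x \in E_i}_A G_x$. For $s \in S_i$, let $q^i_s : \ast^{x \in E_i}_A G_x \hookrightarrow \ast^{b \in B_i}_A G_b = G_i$ be the homomorphism sending $G_x$ identically onto $G_{\pi^i_s(x)}$; it is well defined (the maps agree on $A$) and injective, since $\pi^i_s$ is injective so reduced expressions are carried to reduced expressions. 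Set $P^i_s = q^i_s \circ p_i : \mathcal{G} \to G_i$, and then copy the proof of Theorem \ref{gen-wr-prod} line for line: define $\rho_i : G \wr^{\ast_A}_\alpha H \to G_i^{\oplus A_i} \rtimes_{\beta_{A_i}} \Sym(A_i)$ by $\rho_i(gh) = \left(\bigoplus_{s \in S_i} P^i_s(g) \oplus \bigoplus_{a \in A_i \setminus S_i} 1_{G_i}\right)\varphi_i(h)$, and set $\rho = (\rho_i)_\mathcal{U}$ for an arbitrary free ultrafilter $\mathcal{U}$.

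One checks that $\rho$ is a homomorphism exactly as in Theorem \ref{gen-wr-prod}: the only property of $P^i_s$ used there is the identity $P^i_s(\beta_{h_1}(g_2)) = P^i_{\varphi_i(h_1^{-1})s}(g_2)$ for $s$ ranging over a subset of $A_i$ of density tending to $1$, and here this again follows from the orbit-approximation relation $\pi^i_{\varphi_i(h_1^{-1})s}(x) = \pi^i_s(\alpha(h_1)x)$, using that $q^i_s$ merely relabels a letter of $g_2$ lying in $G_x$ to an identical letter lying in $G_{\pi^i_s(x)}$ and that $\beta_{h_1}$ merely permutes factors of $\mathcal{G}$. Since each $G_i$ is sofic and each $A_i$ is finite, Corollary \ref{intermediate-ultraprod-cor} applies directly to give that $(G \wr^{\ast_A}_\alpha H)/\ker\rho$ is sofic --- note that, because $G_i$ enters only through its soficity, no amalgamated analogue of the embedding lemma preceding that corollary is required. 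Next, as in Theorem \ref{gen-wr-prod}, the map $\iota : G \wr^{\ast_A}_\alpha H \to (G \wr^{\ast_A}_\alpha H)/\ker\rho \times H$, $gh \mapsto (gh\ker\rho, h)$, is injective: if $1 \neq g \in \ker\iota \subseteq \mathcal{G}$ then for all large $i$ one has $p_i(g) = g$ and $P^i_s(g) = q^i_s(g) \neq 1_{G_i}$ for every $s \in S_i$, since a reduced expression for $g$ in $\mathcal{G}$ is carried by $q^i_s$ to a reduced expression in $G_i$ and an element of $A \setminus \{1\}$ stays nontrivial as $A$ embeds in $G_i$; hence $d_{G_i, A_i}(\rho_i(g), 1) \geq |S_i|/|A_i| > 1 - \epsilon_i \to 1$ and $g \notin \ker\rho$, a contradiction. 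As $(G \wr^{\ast_A}_\alpha H)/\ker\rho$ and $H$ are sofic, so is $G \wr^{\ast_A}_\alpha H$. The two special cases then follow at once: for $A = \{1_G\}$ a free product of countably many copies of a sofic group is sofic (the amalgam is trivial, hence amenable), and for an amenable subgroup $A \leq G$ an amalgamated free product of countably many copies of the sofic group $G$ over the common subgroup $A$ is sofic, both by the results on amalgamation over amenable subgroups cited in the introduction.

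The only genuinely new ingredient --- and hence the step I expect to require care --- is the ``collapsing retraction'' $p_i$: an amalgamated free product admits no coordinate-forgetting projection, so one must fold the factors indexed outside $E_i$ onto a single factor inside $E_i$ and verify that this is a well-defined homomorphism fixing $\ast^{x \in E_i}_A G_x$ pointwise, so that $P^i_s$ agrees with $q^i_s$ on any fixed element of $\mathcal{G}$ once $i$ is large. Granting this, every remaining step --- the multiplicativity estimate for $\rho$, the appeal to Corollary \ref{intermediate-ultraprod-cor}, and the injectivity of $\iota$ --- transfers from the proof of Theorem \ref{gen-wr-prod} with only cosmetic changes, the normal-form theory of amalgamated free products replacing the trivial coordinatewise bookkeeping used there.
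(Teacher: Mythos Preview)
Your proof is correct and follows essentially the same route as the paper's: the same choices of $G_i = \ast^{b \in B_i}_A G_b$, $q^i_s$, $\rho_i$, and the same appeal to Corollary~\ref{intermediate-ultraprod-cor} together with the injectivity of $\iota$. The one point of departure is your definition of the retraction $p_i$: the paper simply takes $p_i$ to be the crude set-theoretic map that is the identity on $\ast^{x \in E_i}_A G_x$ and sends every other element to $1$, then observes that, since any fixed $g \in \mathcal{G}$ has finite support, $p_i$ is eventually the identity on $g$ and hence $P^i_s$ is eventually multiplicative on any fixed finite collection of elements---which is all that the ultraproduct argument needs. Your version, folding the factors indexed outside $E_i$ onto a single $G_{x_0}$ via the universal property, is a genuine homomorphism from the start; this is a clean variant that buys you honest multiplicativity of $P^i_s$ for every $i$ and removes the need to track ``eventual'' multiplicativity, but it is not required for the argument and the two constructions agree on any fixed element once $i$ is large.
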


\begin{proof}[Proof outline] Again, fix increasing sequences of finite subsets $F_1 \subseteq F_2 \subseteq \cdots \subseteq H$ and $E_1 \subseteq E_2 \subseteq \cdots \subseteq X$ s.t. $\cup_i F_i = H$, $\cup_i E_i = X$. Fix a decreasing sequence $\epsilon_i > 0$ s.t. $\lim_i \epsilon_i = 0$. For each $i$, let $\varphi_i: H \rightarrow \Sym(A_i)$ be a unital, $(F_i, \epsilon_i)$-multiplicative, and an $(F_i, E_i, \epsilon_i)$-orbit approximation of $\alpha$. By definition of $(F_i, E_i, \epsilon_i)$-orbit approximation, there exists a finite set $B_i$ and a subset $S_i \subseteq A_i$ s.t. $|S_i| > (1 - \epsilon_i)|A_i|$ and for each $s \in S_i$ there is an injective map $\pi^i_s: E_i \hookrightarrow B_i$ s.t. $\pi^i_{\varphi_i(g)s}(x) = \pi^i_s(\alpha(g^{-1})x)$ for all $s \in S_i$, $g \in F_i$, $x \in E_i$, whenever $\varphi_i(g)s \in S_i$ and $\alpha(g^{-1})x \in E_i$. Let $G_i = \ast^{b \in B_i}_A G_b$, where $G_b$ are copies of $G$. Under the assumption that the amalgamated free product of any countably many copies of $G$ over $A$ is sofic, we see that $G_i$ is sofic. Let $p_i: \ast^{x \in X}_A G_x \rightarrow \ast^{x \in E_i}_A G_x$, where $G_x$ are copies of $G$, be the map that is the identity map on $\ast^{x \in E_i}_A G_x$ and sends everything else to $1$. Let $q^i_s: \ast^{x \in E_i}_A G_x \hookrightarrow \ast^{b \in B_i}_A G_b = G_i$ be the inclusion map induced by $\pi^i_s: E_i \hookrightarrow B_i$. Let $P^i_s = q^i_s \circ p_i$.

Let the action of $H$ on $\ast^{x \in X}_A G_x$ via $\alpha$ be denoted by $\beta$, i.e., $G \wr^{\ast_A}_{\alpha} H = \ast^{x \in X}_A G_x \rtimes_\beta H$. We define $\rho_i: G \wr^{\ast_A}_{\alpha} H \rightarrow G_i^{\oplus A_i} \rtimes_{\beta_{A_i}} \Sym(A_i)$ as follows,
\begin{equation*}
    \rho_i(gh) = (\oplus_{s \in S_i} P^i_s(g) \bigoplus \oplus_{a \in A_i \setminus S_i} 1_{G_i}) \cdot \varphi_i(h)
\end{equation*}
where $g \in \ast^{x \in X}_A G_x$ and $h \in H$. Let $\mathcal{U}$ be an arbitrary free ultrafilter on $\mathbb{N}$. Let $\rho: G \wr_{\alpha} H \rightarrow \prod_\mathcal{U} (G_i^{\oplus A_i} \rtimes_{\beta_{A_i}} \Sym(A_i), d_{G_i, A_i})$ be given by $\rho(gh) = (\rho_i(gh))_\mathcal{U}$. The remainder of the proof follows the same outline as the proof of Theorem \ref{gen-wr-prod} - the only additional fact we note here is that, as the support of any element in $\ast^{x \in X}_A G_x$ is finite, it is eventually contained in $E_i$ for large enough $i$, and so $p_i$, and therefore $P^i_s = q^i_s \circ p_i$, is eventually multiplicative on any fixed finitely many elements of $\ast^{x \in X}_A G_x$.
\end{proof}

A natural setting where the first line of the above applies is arbitrary free products of free groups amalgamated over any fixed subgroup (see \cite{gao2021relative, mjgaosri}).

For the following results in the Hilbert-Schmidt setting, they can be proved using the same line of reasoning, where the map $p_i$ is replaced by the conditional expectation from $M^{\bar{\otimes} X}$ (or $\ast^{x \in X}_N M_x$) to $M^{\bar{\otimes} E_i}$ (or $\ast^{x \in E_i}_N M_x$, resp.) $G_i$ shall now be replaced by $M_i = M^{\bar{\otimes} B_i}$ (or $M_i = \ast^{b \in B_i}_N M_b$, resp.) $q^i_s$ shall still be the natural inclusion map induced by $\pi^i_s: E_i \hookrightarrow B_i$ and $P^i_s = q^i_s \circ p_i$. The map $\rho_i$ shall now be a map from $M \wr_\alpha H$ (or $M \wr^{\ast_N}_\alpha H$, resp.) to $M_i^{\bar{\otimes} A_i} \bar{\otimes} \mathbb{M}_{|A_i|}(\mathbb{C}) \bar{\otimes} L(H)$ given by,

\begin{equation*}
    \rho_i(mh) = [(\oplus_{s \in S_i} P^i_s(m) \bigoplus \oplus_{a \in A_i \setminus S_i} 0) \cdot \varphi_i(h)] \otimes \lambda_h
\end{equation*}
where $m \in M^{\bar{\otimes} X}$ (or $\ast^{x \in X}_N M_x$, resp.) and $h \in H$, and where $\oplus_{s \in S_i} P^i_s(m) \bigoplus \oplus_{a \in A_i \setminus S_i} 0$ is understood as a diagonal matrix in $M_i^{\bar{\otimes} A_i} \bar{\otimes} \mathbb{M}_{|A_i|}(\mathbb{C})$, $\varphi_i(h)$ is now interpreted as a permutation matrix in $\mathbb{M}_{|A_i|}(\mathbb{C})$, and $\lambda_h$ is the unitary associated with $h$ in $L(H)$. The remainder of the proof follows essentially the same outline, replacing the arguments verifying the conditions on the $d_{G_i, A_i}$ metric with the conditions on the preservation of the trace. (The final part of the argument, dealing with the kernel of $\rho$, is no longer needed, as we included the $\otimes \lambda_h$ term in the definition of $\rho_i$.)

\begin{thm}\label{hyper1}
Let $(M, \tau)$ be a Connes-embeddable tracial von Neumann algebra, $H$ be a hyperlinear group, $\alpha: H \curvearrowright X$ be a sofic action. Then the generalized wreath product $M \wr_{\alpha} H$ is Connes-embeddable. In particular, if $G, H$ are hyperlinear groups and $\alpha: H \curvearrowright X$ is a sofic action, then the generalized wreath product $G \wr_{\alpha} H$ is hyperlinear.
\end{thm}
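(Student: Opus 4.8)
The plan is to carry the proof of Theorem~\ref{gen-wr-prod} over to the tracial von Neumann algebra setting, replacing normalized Hamming distances by $\|\cdot\|_2$-distances coming from traces and the sofic groups $G_i=G^{\oplus B_i}$ by the Connes-embeddable algebras $M_i=M^{\bar\otimes B_i}$. First I would fix increasing exhaustions $F_1\subseteq F_2\subseteq\cdots$ of $H$ and $E_1\subseteq E_2\subseteq\cdots$ of $X$ by finite sets, a sequence $\epsilon_i\downarrow 0$, and for each $i$ a unital, $(F_i,\epsilon_i)$-multiplicative map $\varphi_i:H\to\Sym(A_i)$ that is an $(F_i,E_i,\epsilon_i)$-orbit approximation of $\alpha$, with witnessing finite set $B_i$, subset $S_i\subseteq A_i$ of density $>1-\epsilon_i$, and injections $\pi^i_s:E_i\hookrightarrow B_i$ for $s\in S_i$. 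With $M_i=M^{\bar\otimes B_i}$, each algebra $N_i:=M_i^{\bar\otimes A_i}\,\bar\otimes\,\mathbb{M}_{|A_i|}(\mathbb{C})\,\bar\otimes\,L(H)$ is Connes-embeddable: $M$ is Connes-embeddable by hypothesis, $B_i$ and $A_i$ are finite so finite tensor powers of $M$ (and of $M_i$) remain Connes-embeddable, $\mathbb{M}_{|A_i|}(\mathbb{C})$ is finite dimensional, $L(H)$ is Connes-embeddable because $H$ is hyperlinear, and a tensor product of Connes-embeddable tracial von Neumann algebras is Connes-embeddable. The target is a trace-preserving $*$-homomorphism $\rho:M\wr_\alpha H\to\prod_\mathcal{U}N_i$; since $M\wr_\alpha H$ is separable, such a $\rho$ witnesses its Connes-embeddability, by the tracial analogue of Corollary~\ref{intermediate-ultraprod-cor} (a separable tracial von Neumann algebra embedding trace-preservingly into an ultraproduct of Connes-embeddable algebras is itself Connes-embeddable).

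Next I would introduce the structure maps as described in the paragraph preceding the statement: the trace-preserving conditional expectation $p_i:M^{\bar\otimes X}\to M^{\bar\otimes E_i}$, the trace-preserving normal inclusion $q^i_s:M^{\bar\otimes E_i}\hookrightarrow M^{\bar\otimes B_i}=M_i$ induced by $\pi^i_s$, and $P^i_s:=q^i_s\circ p_i$. Each $P^i_s$ is unital, completely positive, and trace-preserving, restricts to a $*$-homomorphism on $M^{\bar\otimes E_i}$, and, since $\bigcup_i E_i=X$, is multiplicative on any fixed finite set of elements of the weakly dense $*$-subalgebra of $M^{\bar\otimes X}$ consisting of finitely-supported elements, provided $i$ is large enough. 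I then define $\rho_i:M\wr_\alpha H\to N_i$ by linearly extending, from the canonical dense $*$-subalgebra,
\begin{equation*}
\rho_i(mh)=[(\oplus_{s\in S_i}P^i_s(m)\ \bigoplus\ \oplus_{a\in A_i\setminus S_i}0)\cdot\varphi_i(h)]\otimes\lambda_h,
\end{equation*}
read inside $M_i^{\bar\otimes A_i}\,\bar\otimes\,\mathbb{M}_{|A_i|}(\mathbb{C})\,\bar\otimes\,L(H)$ precisely as in that displayed formula ($P^i_s(m)$ placed in the $s$-th tensor leg of $M_i^{\bar\otimes A_i}$ against the matrix unit $e_{ss}$, $\varphi_i(h)$ the associated permutation matrix, $\lambda_h$ the canonical unitary in $L(H)$), and I set $\rho=(\rho_i)_\mathcal{U}$ for a free ultrafilter $\mathcal{U}$ on $\mathbb{N}$.

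The bulk of the work is to verify that $\rho$ is a unital, trace-preserving $*$-homomorphism; injectivity is then automatic, since the kernel of a trace-preserving $*$-homomorphism of tracial von Neumann algebras equals $z(M\wr_\alpha H)$ for a central projection $z$ with $\tau(z)=\tau(\rho(z))=0$, forcing $z=0$. For multiplicativity and $*$-preservation I would transcribe, step for step, the computation in the proof of Theorem~\ref{gen-wr-prod}: the $L(H)$-legs multiply exactly; the permutation-matrix legs are controlled by $(F_i,\epsilon_i)$-multiplicativity of $\varphi_i$ (so $\varphi_i(h_1h_2)$ and $\varphi_i(h_1)\varphi_i(h_2)$ agree off a set of density $\to 0$); and on the $M_i^{\bar\otimes A_i}$-legs one uses the multiplicativity of the $P^i_s$ on large $E_i$ together with the compatibility identity $P^i_s(\beta_{h_1}(m_2))=P^i_{\varphi_i(h_1^{-1})s}(m_2)$, valid for all $s$ outside a set of density $\to 0$ and forced by the orbit-approximation relation $\pi^i_{\varphi_i(h_1^{-1})s}(x)=\pi^i_s(\alpha(h_1)x)$ once the relevant supports lie in $E_i$. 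Every discrepancy is then bounded in $\|\cdot\|_2$ by $O(\epsilon_i)\to 0$, so $\rho$ is genuinely a $*$-homomorphism. Trace-preservation is in fact cleaner than in the group case: for $h\ne 1$ the factor $\lambda_h$ has trace $0$ in $L(H)$, so $\tau_{N_i}(\rho_i(mh))=0$ already at the finite stage, while for $h=1$ one computes $\tau_{N_i}(\rho_i(m))=\tfrac{|S_i|}{|A_i|}\tau(m)\to\tau(m)$ because $p_i$ and $q^i_s$ preserve traces; likewise $\rho_i(1)$ is a projection of trace $|S_i|/|A_i|\to 1$, so $\rho(1)=1$. Finally, since each $N_i$ is Connes-embeddable, $\prod_\mathcal{U}N_i$ embeds trace-preservingly into an ultraproduct of copies of $\mathcal{R}^\mathcal{U}$, whence the separable image $\rho(M\wr_\alpha H)$ embeds into $\mathcal{R}^\mathcal{U}$ by the usual diagonal argument; thus $M\wr_\alpha H$ is Connes-embeddable. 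The final assertion follows by taking $M=L(G)$, using $L(G\wr_\alpha H)=L(G)\wr_\alpha H$ and the fact that a group is hyperlinear exactly when its group von Neumann algebra is Connes-embeddable.

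I expect the main obstacle to be the multiplicativity check for $\rho$. Since the conditional expectations $p_i$ are not multiplicative, the maps $P^i_s$ cannot be treated as honest $*$-homomorphisms, so one must genuinely exploit that every fixed element of $M^{\bar\otimes X}$ eventually lies in $M^{\bar\otimes E_i}$; simultaneously one must re-derive from the orbit-approximation axiom the compatibility of the $P^i_s$ with the generalized Bernoulli shift, which is where the orbit structure of the sofic approximation of $\alpha$ enters. Once those two points are secured, the remainder is routine bookkeeping transporting the estimates of Theorem~\ref{gen-wr-prod} from normalized Hamming distance to the $2$-norm on the $N_i$, with the added $L(H)$-leg taking care of trace-preservation and injectivity automatically.
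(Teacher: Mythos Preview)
Your proposal is correct and follows essentially the same approach as the paper: the paper's proof of Theorem~\ref{hyper1} is given only as a sketch in the paragraph preceding the statement, and that sketch is precisely the construction you describe---replace $G_i$ by $M_i=M^{\bar\otimes B_i}$, replace $p_i$ by the conditional expectation onto $M^{\bar\otimes E_i}$, set $P^i_s=q^i_s\circ p_i$, define $\rho_i(mh)=[(\oplus_{s\in S_i}P^i_s(m)\oplus\oplus_{a\in A_i\setminus S_i}0)\cdot\varphi_i(h)]\otimes\lambda_h$ into $M_i^{\bar\otimes A_i}\bar\otimes\mathbb{M}_{|A_i|}(\mathbb{C})\bar\otimes L(H)$, and transcribe the multiplicativity computation of Theorem~\ref{gen-wr-prod} with $\|\cdot\|_2$-estimates in place of Hamming-distance estimates. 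You also correctly anticipate the paper's remark that the extra $\otimes\lambda_h$ leg makes the kernel argument unnecessary, since trace-preservation now forces injectivity.
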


\begin{thm}\label{hyper2}
Let $H$ be a hyperlinear group, $\alpha: H \curvearrowright X$ be a sofic action, $N \subseteq M$ be an inclusion of tracial von Neumann algebras s.t. the amalgamated free product of any countably many copies of $M$ over $N$ is Connes-embeddable. Then the amalgamated free generalized wreath product $M \wr^{\ast_N}_\alpha H$ is Connes-embeddable. In particular, if $H$ is a hyperlinear group, $\alpha: H \curvearrowright X$ is a sofic action, and $M$ is a Connes-embeddable tracial von Neumann algebra, then the free generalized wreath product $M \wr^\ast_\alpha H$ is Connes-embeddable, and under the same conditions, if we in addition have an amenable subalgebra $N$ of $M$, then the amalgamated free generalized wreath product $M \wr^{\ast_N}_\alpha H$ is Connes-embeddable.
\end{thm}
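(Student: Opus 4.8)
The plan is to carry out the amalgamated free product, von Neumann algebra version of the proof of Theorem~\ref{gen-free-wr-prod}, exactly as sketched in the paragraph preceding Theorem~\ref{hyper1}: no new idea beyond that proof is needed, only the replacement of the normalized Hamming distance estimates by trace estimates. I would first dispose of the two ``in particular'' assertions by reducing them to the general statement: when $N=\mathbb{C}$ the free product of countably many copies of a Connes-embeddable $M$ is Connes-embeddable, and when $N\subseteq M$ is an amenable (hence hyperfinite) subalgebra the amalgamated free product of countably many copies of a Connes-embeddable $M$ over $N$ is Connes-embeddable; both are known closure properties of Connes-embeddability for (amalgamated) free products, so it suffices to treat the case in which the amalgamated free product of any countably many copies of $M$ over $N$ is assumed Connes-embeddable.

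Next I would build the approximating maps. Fix increasing exhaustions $F_1\subseteq F_2\subseteq\cdots$ of $H$ and $E_1\subseteq E_2\subseteq\cdots$ of $X$ by finite sets and $\epsilon_i\downarrow 0$; using soficity of $\alpha$, choose $\varphi_i\colon H\to\Sym(A_i)$ unital, $(F_i,\epsilon_i)$-multiplicative and $(F_i,E_i,\epsilon_i)$-orbit approximations, with associated finite sets $B_i$, subsets $S_i\subseteq A_i$ with $|S_i|>(1-\epsilon_i)|A_i|$, and injections $\pi^i_s\colon E_i\hookrightarrow B_i$ satisfying $\pi^i_{\varphi_i(h)s}(x)=\pi^i_s(\alpha(h^{-1})x)$ on the relevant domain. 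Put $M_i=\ast^{b\in B_i}_N M_b$, which is Connes-embeddable by hypothesis; let $p_i$ be the trace-preserving conditional expectation of the \emph{algebraic} amalgamated free product $\ast^{x\in X}_N M_x$ onto $\ast^{x\in E_i}_N M_x$, let $q^i_s\colon\ast^{x\in E_i}_N M_x\hookrightarrow M_i$ be the inclusion induced by $\pi^i_s$, and set $P^i_s=q^i_s\circ p_i$. Put $\mathcal{N}_i=M_i^{\bar{\otimes} A_i}\bar{\otimes}\mathbb{M}_{|A_i|}(\mathbb{C})\bar{\otimes}L(H)$ with the tensor product trace; since $M_i$ is Connes-embeddable and $H$ is hyperlinear, $\mathcal{N}_i$ is Connes-embeddable. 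Define $\rho_i\colon M\wr^{\ast_N}_\alpha H\to\mathcal{N}_i$ on the algebraic crossed product by
\[
\rho_i(m\lambda_h)=\Bigl[\bigl(\oplus_{s\in S_i}P^i_s(m)\bigoplus\oplus_{a\in A_i\setminus S_i}0\bigr)\cdot\varphi_i(h)\Bigr]\otimes\lambda_h,
\]
reading $\varphi_i(h)$ as a permutation matrix in $\mathbb{M}_{|A_i|}(\mathbb{C})$ and $\lambda_h$ as the canonical unitary of $L(H)$, and set $\rho=(\rho_i)_\mathcal{U}\colon M\wr^{\ast_N}_\alpha H\to\prod_\mathcal{U}\mathcal{N}_i$ for a free ultrafilter $\mathcal{U}$.

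Finally I would check that $\rho$ is a well-defined, trace-preserving $*$-homomorphism and conclude. Well-definedness is clear since $\|\rho_i(a)\|$ is bounded uniformly in $i$ for each fixed $a$. That $\rho$ is $*$-preserving and multiplicative follows by repeating the computation in the proof of Theorem~\ref{gen-wr-prod} line by line, with $d_{G_i,A_i}$ replaced by $\|\cdot\|_2$ on $\mathcal{N}_i$: for $a=m_1\lambda_{h_1}$, $b=m_2\lambda_{h_2}$ one has $ab=m_1\beta_{h_1}(m_2)\lambda_{h_1h_2}$, and on the set of $s$ with $s,\varphi_i(h_1^{-1})s\in S_i$ and $\varphi_i(h_1^{-1})^{-1}s=\varphi_i(h_1)s$ --- which has density $1-O(\epsilon_i)$ eventually --- one obtains $P^i_s(\beta_{h_1}(m_2))=P^i_{\varphi_i(h_1^{-1})s}(m_2)$ from the identity $\pi^i_{\varphi_i(h_1^{-1})s}=\pi^i_s\circ\alpha(h_1)$ on $E_i$, valid for large $i$ because $\operatorname{supp}(m_2)$ and $\alpha(h_1)\operatorname{supp}(m_2)$ are finite and hence eventually lie inside $E_i$, where $p_i$ is the identity and $P^i_s$ is multiplicative; the complementary set contributes $O(\epsilon_i)$ to $\|\cdot\|_2^2$, $\varphi_i(h_1h_2)$ agrees with $\varphi_i(h_1)\varphi_i(h_2)$ off a set of density $<\epsilon_i$, and $\lambda_{h_1h_2}=\lambda_{h_1}\lambda_{h_2}$ exactly. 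For trace preservation, $\tau_{\mathcal{N}_i}(\rho_i(m\lambda_h))=0$ when $h\neq e$ because $\tau_{L(H)}(\lambda_h)=0$, and equals $\tfrac{|S_i|}{|A_i|}\tau(p_i(m))=\tfrac{|S_i|}{|A_i|}\tau(m)$ when $h=e$, which tends to $\tau(m)$ along $\mathcal{U}$; this is precisely where the Hamming estimate of the group argument is traded for a trace estimate, and the ratio $\tfrac{|S_i|}{|A_i|}\to 1$ is what makes the limit $\rho$ (rather than each $\rho_i$) trace-preserving, which is also why the kernel argument of Theorem~\ref{gen-wr-prod} is not needed here. Since $\rho$ is $\|\cdot\|_2$-isometric it extends to a normal trace-preserving, hence injective, embedding of $M\wr^{\ast_N}_\alpha H$ into $\prod_\mathcal{U}\mathcal{N}_i$, and an ultraproduct of Connes-embeddable tracial von Neumann algebras is again Connes-embeddable (as one checks from the microstate characterization), giving the theorem. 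The only point that requires genuine care is the bookkeeping for the identity $P^i_s\circ\beta_{h_1}=P^i_{\varphi_i(h_1^{-1})s}$ on a large set, together with the need to work throughout with the algebraic rather than the von Neumann amalgamated free product so that ``finite support'' is available; beyond this there is no new obstacle relative to Theorem~\ref{gen-free-wr-prod}.
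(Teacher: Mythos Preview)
Your proposal is correct and follows precisely the approach the paper sketches in the paragraph preceding Theorem~\ref{hyper1}: the same maps $p_i$, $q^i_s$, $P^i_s$, the same $\rho_i$ landing in $M_i^{\bar\otimes A_i}\bar\otimes\mathbb{M}_{|A_i|}(\mathbb{C})\bar\otimes L(H)$, the replacement of $d_{G_i,A_i}$-estimates by trace estimates, and the observation that the $\otimes\lambda_h$ factor makes the kernel argument of Theorem~\ref{gen-wr-prod} unnecessary. Your added remarks about working with the algebraic amalgamated free product to ensure finite support and about the trace computation $\tfrac{|S_i|}{|A_i|}\tau(m)\to\tau(m)$ are exactly the details the paper leaves implicit.
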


\begin{cor}\label{hyper3}
Let $H$ be a hyperlinear group, $\alpha: H \curvearrowright X$ be a sofic action, $A \leq G$ be an inclusion of countable discrete groups s.t. the amalgamated free product of any countably many copies of $G$ over $A$ is hyperlinear. Then the amalgamated free generalized wreath product $G \wr^{\ast_A}_\alpha H$ is hyperlinear. In particular, if $G, H$ are hyperlinear groups and $\alpha: H \curvearrowright X$ is a sofic action, then the free generalized wreath product $G \wr^\ast_\alpha H$ is hyperlinear, and under the same conditions, if we in addition have an amenable subgroup $A$ of $G$, then the amalgamated free generalized wreath product $G \wr^{\ast_A}_\alpha H$ is hyperlinear.
\end{cor}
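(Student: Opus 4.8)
The plan is to deduce this corollary from Theorem \ref{hyper2} by passing to group von Neumann algebras, using the standard fact that a countable discrete group $\Gamma$ is hyperlinear precisely when its group von Neumann algebra $L(\Gamma)$, equipped with its canonical trace, is Connes-embeddable.

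First I would set up the operator-algebraic dictionary. For an inclusion $A \le G$ of countable discrete groups, equip $L(A) \subseteq L(G)$ with the canonical trace-preserving conditional expectation; then for any index set $Y$ there is a trace-preserving isomorphism $L\big(\ast^{y \in Y}_A G_y\big) \cong \ast^{y \in Y}_{L(A)} L(G_y)$ onto the reduced amalgamated free product (Voiculescu), and this isomorphism is equivariant for any action permuting the index set $Y$. Combining this with $L(\Gamma \rtimes_\beta H) \cong L(\Gamma) \rtimes_\beta H$ for semidirect products of discrete groups, and writing $M = L(G)$, $N = L(A)$, one gets
\begin{equation*}
    L\big(G \wr^{\ast_A}_\alpha H\big) = L\big((\ast^{x \in X}_A G_x) \rtimes_\beta H\big) \cong \big(\ast^{x \in X}_{L(A)} L(G_x)\big) \rtimes_\beta H = M \wr^{\ast_N}_\alpha H .
\end{equation*}

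Next I would check that $N \subseteq M$ meets the hypothesis of Theorem \ref{hyper2}: the assumption that the amalgamated free product of any countably many copies of $G$ over $A$ is hyperlinear translates, via the isomorphism above applied to a countable $Y$, into the statement that the amalgamated free product of any countably many copies of $M = L(G)$ over $N = L(A)$ is Connes-embeddable. Applying Theorem \ref{hyper2} then yields that $M \wr^{\ast_N}_\alpha H$ is Connes-embeddable, hence so is the isomorphic algebra $L\big(G \wr^{\ast_A}_\alpha H\big)$, hence $G \wr^{\ast_A}_\alpha H$ is hyperlinear. For the ``in particular'' clauses I would apply the corresponding clauses of Theorem \ref{hyper2} with $M = L(G)$ — which is Connes-embeddable exactly because $G$ is hyperlinear — taking $N = \C$ in the first case and $N = L(A)$, which is hyperfinite since $A$ is amenable, in the second, and then identifying $M \wr^{\ast}_\alpha H$ (resp.\ $M \wr^{\ast_N}_\alpha H$) with $L(G \wr^{\ast}_\alpha H)$ (resp.\ $L(G \wr^{\ast_A}_\alpha H)$) exactly as above.

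The only step requiring genuine care is the infinite, amalgamated version of the identification $L\big(\ast^{y}_A G_y\big) \cong \ast^{y}_{L(A)} L(G_y)$ together with its equivariance under permutations of the index set; this is the expected extension of Voiculescu's theorem on free products of group von Neumann algebras to the countably-indexed amalgamated setting. The equivariance itself is unproblematic, since $\beta$ merely permutes the factors according to $\alpha$, but one must use the reduced amalgamated free product with respect to the canonical conditional expectations so that it genuinely matches $L$ of the group-theoretic amalgamated free product. Everything else is the definitional unwinding above together with a direct appeal to Theorem \ref{hyper2}.
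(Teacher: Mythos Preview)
Your proposal is correct and matches the paper's intended approach: the paper states Corollary~\ref{hyper3} immediately after Theorem~\ref{hyper2} without a separate proof, the implicit derivation being precisely the passage to group von Neumann algebras $N = L(A) \subseteq L(G) = M$ that you spell out. The identification $L(\ast^{y}_A G_y) \cong \ast^{y}_{L(A)} L(G_y)$ and its equivariance under permutations of the index set are indeed the only points requiring care, and you have flagged them appropriately.
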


\section{Concluding remarks and open questions}

We  document the following Proposition which places an aspect of our work in the context of Elek-Lippner's work \cite{eleklippner} defining soficity for equivalence relations and of Paunescu \cite{Paunescusofic1}. We omit the proof because it is substantially similar to the arguments in the previous section. It is open whether the converse of the statement below  holds.

\begin{prop}
Let $(\Omega, \mu)$ be a standard probability space, $G$ be a sofic group, $\alpha: G \curvearrowright X$ be a sofic action. Then the induced generalized Bernoulli shift $G \curvearrowright (\Omega^X, \mu^{\otimes |X|})$ is sofic in the sense of \cite{Paunescusofic1}.
\end{prop}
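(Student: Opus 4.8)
The plan is to build an explicit sofic approximation of the pmp system $G \curvearrowright (\Omega^X, \mu^{\otimes |X|})$ in the sense of \cite{Paunescusofic1}, following closely the proof of Theorem \ref{gen-wr-prod}; the simplification here is that in a Bernoulli shift the base $(\Omega,\mu)$ carries no $G$-action, so the ``$G$-coordinate'' permutations $P^i_s(g)$ of that proof degenerate to a plain relabelling of coordinates. First I would fix increasing finite sets $F_i \uparrow G$ and $E_i \uparrow X$, an increasing sequence of finite Boolean subalgebras $\cA_i$ of the measure algebra of $(\Omega,\mu)$ whose union generates it, and $\epsilon_i \downarrow 0$. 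Soficity of $\alpha$ provides unital, $(F_i,\epsilon_i)$-multiplicative maps $\varphi_i : G \to \Sym(A_i)$ that are $(F_i, E_i, \epsilon_i)$-orbit approximations of $\alpha$, with accompanying finite sets $B_i$, subsets $S_i \subseteq A_i$ of density $> 1 - \epsilon_i$, and injections $\pi^i_s : E_i \hookrightarrow B_i$ ($s \in S_i$) satisfying $\pi^i_{\varphi_i(g)s}(x) = \pi^i_s(\alpha(g^{-1})x)$; soficity of $G$ itself provides a genuine sofic approximation $\psi_i : G \to \Sym(D_i)$. Letting $(\Omega_i, \mu_i)$ be the finite probability space of atoms of $\cA_i$, a measurable map $q_i : \Omega \to \Omega_i$ realizes $\cA_i$ exactly; after enlarging $B_i$ I also extend each $\pi^i_s$ to an injection $E_i \hookrightarrow B_i$ for all $s \in A_i$.

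Next I would take $Y_i = \Omega_i^{B_i} \times A_i \times D_i$ with the evident product probability measure $\nu_i$, define the genuine $\nu_i$-preserving permutations $\sigma_i(g)(f,a,d) = (f, \varphi_i(g)a, \psi_i(g)d)$, and define a Boolean homomorphism $\theta_i$ on the finite algebra generated by the cylinder sets $C_{x,P} = \{\omega \in \Omega^X : q_i(\omega_x) \in P\}$ ($x \in E_i$, $P \in \cA_i$) by $\theta_i(C_{x,P}) = \{(f,a,d) \in Y_i : f(\pi^i_a(x)) \in P\}$. Because the $\pi^i_a$ are injective, on an atom of the generating algebra the defining conditions on $f$ involve distinct coordinates of $\Omega_i^{B_i}$, so $\theta_i$ is a genuine Boolean homomorphism that is \emph{exactly} $\nu_i$-measure preserving; and $\sigma_i$ is $(F_i,\epsilon_i)$-multiplicative, inherited coordinatewise from $\varphi_i$ and $\psi_i$. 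The only condition requiring an estimate is equivariance: for $g \in F_i$ and $x \in E_i$ with $\alpha(g)x \in E_i$ one has $g \cdot C_{x,P} = C_{\alpha(g)x, P}$, and applying the orbit-approximation identity to $s = \varphi_i(g)^{-1}a' \in S_i$ and $y = \alpha(g)x$ shows $\pi^i_{\varphi_i(g)^{-1}a'}(x) = \pi^i_{a'}(\alpha(g)x)$, whence $\sigma_i(g)\theta_i(C_{x,P}) \,\triangle\, \theta_i(g\cdot C_{x,P})$ is contained in $\{(f,a',d') : a' \notin S_i \text{ or } \varphi_i(g)^{-1}a' \notin S_i\}$, a set of $\nu_i$-measure $< 2\epsilon_i$. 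Passing to the limit along the $F_i, E_i, \cA_i, \epsilon_i$ then yields the desired sofic approximation.

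The step I expect to be the main obstacle is not a single estimate but the bookkeeping needed to match our finitary data $(Y_i, \nu_i, \sigma_i, \theta_i)$ against the precise list of axioms for a sofic pmp action in \cite{Paunescusofic1} — in particular, checking that all ``boundary'' contributions ($a \notin S_i$, $\varphi_i(g)a \notin S_i$, and $\alpha(g)x \notin E_i$) disappear in the limit, and verifying that the factor $D_i$, which injects a genuine sofic approximation of $G$, is truly necessary (the case $|X| = 1$, where $\varphi_i$ may be taken trivial while the action on $\Omega$ is trivial, shows it cannot be dropped). An alternative, essentially equivalent, route is operator-algebraic: since $L^\infty(\Omega,\mu)$ is amenable, hence Connes-embeddable, Theorem \ref{hyper1} gives that $L^\infty(\Omega^X,\mu^{\otimes |X|}) \rtimes G = L^\infty(\Omega) \wr_\alpha G$ is Connes-embeddable, and inspecting the embedding used there shows that it carries the Cartan subalgebra $L^\infty(\Omega^X, \mu^{\otimes |X|})$ into the diagonal (Loeb) subalgebra of the matrix ultraproduct, with the action implemented by the permutation matrices $\varphi_i(g)$ — one of the standard equivalent formulations of soficity of the action.
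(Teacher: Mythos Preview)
Your proposal is correct and is exactly the approach the paper has in mind: the paper omits the proof entirely, saying only that it is ``substantially similar to the arguments in the previous section,'' and what you wrote is precisely the adaptation of the $\rho_i$ construction of Theorem~\ref{gen-wr-prod} (with $G^{\oplus B_i}$ replaced by $\Omega_i^{B_i}$ and with the extra $D_i$-factor supplying freeness), together with the alternative route via Theorem~\ref{hyper1}. The only small technicality is that Paunescu's finite models carry the uniform counting measure, so one should choose $(\Omega_i,\mu_i)$ uniform (e.g.\ dyadic partitions when $\Omega$ is atomless, or rational approximations of atom masses in general) rather than as the atoms of an arbitrary $\cA_i$; this is a routine adjustment.
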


We ask the following two natural questions on the permanence of sofic actions which we are currently unable to answer:

\begin{question}\label{direct sum}
Suppose we have actions $\alpha_i: G_i \curvearrowright X$ which commute with each other and where $i$ ranges over a countable index set. Then the actions naturally give rise to an action $\alpha: \oplus_i G_i \curvearrowright X$. $\alpha$ is sofic iff all $\alpha_i$ are sofic?
\end{question}

\begin{question}
Suppose we have actions $\alpha_i: G_i \curvearrowright X$ where $i$ ranges over a countable index set. Then the actions naturally give rise to an action $\alpha: \ast_i G_i \curvearrowright X$. $\alpha$ is sofic iff all $\alpha_i$ are sofic?
\end{question}

The forward directions of both conjectures follow from item 3 of Proposition \ref{basic-prop}. By item 4 of Proposition \ref{basic-prop}, it suffices to consider the case where there are only two groups $G_1$ and $G_2$. We document below one more natural question which is nothing but the converse of Theorem \ref{gen-wr-prod}: 

\begin{question}Let $G, H$ be nontrivial countable groups, $\alpha: H \curvearrowright X$ be an action. Then the generalized wreath product $G \wr_{\alpha} H$ is sofic iff $G$ and $H$ are sofic and the action $\alpha$ is sofic?
\end{question}

A positive answer to Question \ref{direct sum} implies a positive answer to the above. Indeed, let $\Gamma$ be a sofic group. Then the left multiplication $\alpha_1: \Gamma \curvearrowright \Gamma$ is sofic by Theorem \ref{thm-left-multi}. The right multiplication action $\alpha_2: \Gamma \curvearrowright \Gamma$, $\alpha_2(\gamma)\eta = \eta\gamma^{-1}$ is isomorphic to the left multiplication action, so as such is also sofic. $\alpha_1$ and $\alpha_2$ clearly commute, so the combined action $\alpha: \Gamma \oplus \Gamma \curvearrowright \Gamma$ is sofic assuming a positive answer to Question \ref{direct sum}. Let $\Delta: \Gamma \rightarrow \Gamma \oplus \Gamma$ be the diagonal embedding, then by item 3 of Proposition \ref{basic-prop}, $\alpha \circ \Delta$ is sofic. One can easily verify that $\alpha \circ \Delta$ is the conjugation action of $\Gamma$ on itself. That is, the conjugation action of any sofic group on itself is sofic, assuming Question \ref{direct sum} has a positive answer.

Now, if $G \wr_{\alpha} H$ is sofic, then the above applies to it, so its conjugation action on itself is sofic. By item 3 of Proposition \ref{basic-prop}, we have the conjugation action $\beta: H \curvearrowright G \wr_{\alpha} H$ is sofic. Now, as $G$ is nontrivial, we may fix $g \in G \setminus \{1_G\}$. For any $x \in X$, let $g_x \in G^{\oplus X} \subseteq G \wr_{\alpha} H$ be the element that, as a function from $X$ to $G$, takes value $1_G$ at all elements of $X$ except $x$ and takes value $g$ at $x$. Let the orbit of $g_x$ under $\beta$ be $\mathcal{O}(g_x)$ and the orbit of $x$ under $\alpha$ be $\mathcal{O}(x)$. Since $\beta(h)g_x = hg_xh^{-1} = g_{\alpha(h)x}$ and furthermore that $g_x \neq g_y$ whenever $x \neq y$ as $g \neq 1_G$, we easily see that the map $\mathcal{O}(x) \ni y \mapsto g_y \in \mathcal{O}(g_x)$ is a bijection that identifies $\beta$ restricted to $\mathcal{O}(g_x)$ and $\alpha$ restricted to $\mathcal{O}(x)$. As $\beta$ is sofic, this implies, via item 2 of Proposition \ref{basic-prop}, that $\alpha$ restricted to $\mathcal{O}(x)$ is sofic. Since $x \in X$ is arbitrary, $\alpha$ restricted to each of its orbit is sofic, whence $\alpha$ is sofic by Proposition \ref{transitive-prop}.

\bibliographystyle{amsalpha}
\bibliography{inneramen}

\end{document}